\theoremstyle{plain}
\newtheorem{theorem}{Theorem}[]
\newtheorem{lemma}{Lemma}[]
\newtheorem{proposition}{Proposition}[]
\theoremstyle{definition}
\theoremstyle{remark}
\newtheorem{remark}{Remark}[]
\newcommand{\eqdistr}{\stackrel{d}{=}}
\renewcommand{\P}{\mathbb P}
\newcommand{\AAA}{\mathcal A}
\newcommand{\E}{\mathbb E}
\newcommand{\R}{\mathbb{R}}
\newcommand{\N}{\mathbb{N}}
\newcommand{\Z}{\mathbb{Z}}
\newcommand{\eps}{\varepsilon}
\newcommand{\MMM}{\mathbb{M}}
\begin{document}

\title{Persistence of competing systems of branching random walks}
\author{Zakhar Kabluchko}
\keywords{Branching random walk; Poisson point process; exponential intensity; persistence; local extinction; equilibrium state; cluster invariant point process}
\subjclass[2010]{Primary, 60J80; Secondary, 60G55}
\address{Institute of Stochastics, Ulm University, Helmholtzstr.\ 18, 89069 Ulm, Germany}
\begin{abstract}
We consider a system of independent branching random walks on $\R$  which start off a Poisson point process with intensity of the form $e_{\lambda}(du)=e^{-\lambda u}du$, where $\lambda\in\R$ is chosen in such a way that the overall intensity of particles is preserved. Denote by $\chi$ the cluster distribution and let $\varphi$ be the log-Laplace transform of the intensity of $\chi$. If $\lambda\varphi'(\lambda)>0$, we show that the system is persistent (stable) meaning that the point process formed by the particles in the $n$-th generation converges as $n\to\infty$ to a non-trivial point process $\Pi_{e_{\lambda}}^{\chi}$ with intensity $e_{\lambda }$. If $\lambda\varphi'(\lambda)<0$, then the branching population suffers local extinction meaning that the limiting point process is empty. We characterize (generally, non-stationary) point processes on $\R$ which are cluster-invariant  with respect to the cluster distribution $\chi$ as mixtures of the point processes $\Pi_{ce_{\lambda}}^{\chi}$ over $c>0$ and $\lambda\in K_{\text{st}}$, where $K_{\text{st}}=\{\lambda\in\R: \varphi(\lambda)=0, \lambda\varphi'(\lambda)>0\}$.
\end{abstract}
\maketitle

\section{Introduction}\label{sec:main}




\subsection{Persistence criterion}
We consider a population of particles on $\R^d$ whose stochastic evolution (branching dynamics) is governed by the following rules. The initial positions of the particles form a Poisson point process on $\R^d$ denoted by $\pi_0$. The intensity measure of $\pi_0$, denoted by $\nu$, is  assumed to be finite on bounded sets. After one unit of time each particle is replaced, independently of all other particles, by a random cluster of offsprings whose displacements with respect to the parent particle are governed by a point process $\chi$ (which is allowed to be empty). Note that the distribution of $\chi$ does not depend on the position of the parent particle.     The daughter particles form a point process denoted by $\pi_1$. Then, any of the daughter particles is independently replaced by a random cluster of granddaughter particles forming a point process $\pi_2$, and so on.  We denote the point process formed by the particles of the $n$-th generation by $\pi_n$.
We will be interested in the behavior of $\pi_n$ as $n\to\infty$.

This problem has been much studied especially if the cluster distribution $\chi$ is critical (i.e., the mean number of particles in the cluster $\chi$ is $1$) and $\nu$ is the Lebesgue measure; see~\cite{kallenberg,ivanoff,gorostiza_wakolbinger,wakolbinger_spatial} and~\cite[Chapters 11, 12]{matthes_etal_book}.
In this particular case it has been shown, under a second moment condition on the intensity of the clusters,  that in dimensions $d=1$ and $d=2$ the branching dynamics suffers local extinction. This means that the point process $\pi_n$ converges as $n\to\infty$ to the empty point process. In dimension $d\geq 3$, the branching dynamics is persistent (stable) meaning that $\pi_n$ converges to some non-trivial point process whose intensity is equal to the Lebesgue measure. A general theory of convergence to equilibrium of branching populations on an arbitrary Polish space was developed in~\cite{main_book}; see also~\cite{wakolbinger_book}.

We will be interested what happens if the  Poisson point process $\pi_0$ formed by the initial positions of the particles is not homogeneous, i.e., the measure $\nu$ is not the Lebesgue measure.
We will need the following assumption.  Let $J$ be the intensity of the point process $\chi$, that is $J$ is a measure on $\R^d$ given by $J(B)=\E[\chi(B)]$ for every Borel set $B\subset \R^d$. Assume that the log-Laplace transform of $J$ is finite:
\begin{equation}\label{eq:def_varphi}
\varphi(t):=\log \int_{\R^d} e^{\langle t, u\rangle} J(du)<\infty,\;\;\; t\in\R^d.
\end{equation}
Here, $\langle \cdot,\cdot\rangle$ denotes the scalar product on $\R^d$.

When studying persistence of a branching dynamics it is natural to require that the intensity of the point process $\pi_n$ should remain constant in time.
This leads to a convolution equation
\begin{equation}\label{eq:deny}
J*\nu=\nu.
\end{equation}
The solutions of this equation have been described by~\citet{deny}.
For $\lambda\in\R^d$ let $e_{\lambda}$ be a measure on $\R^d$ whose density with respect to the Lebesgue measure is given by
\begin{equation}\label{eq:def_e_lambda}
e_{\lambda}(du)=e^{-\langle\lambda,u\rangle}du.
\end{equation}
An easy computation shows that any measure of the form $e_{\lambda}$, where $\lambda$ is such that $\varphi(\lambda)=0$, is a solution to~\eqref{eq:deny}. Conversely, under a non-lattice assumption on $J$, \citet{deny} showed that any measure $\nu$ which solves~\eqref{eq:deny} and is finite on bounded sets can be represented as a mixture of the form
\begin{equation}\label{eq:deny_mixture}
\nu(\cdot)=\int_{K} e_{\lambda}(\cdot) \mu(d\lambda),
\end{equation}
where $\mu$ is a finite measure on the set $K:=\{\lambda\in\R^d: \varphi(\lambda)=0\}$.

In the next theorem we give a criterion for persistensy/local extinction of the branching dynamics starting off a Poisson process with intensity measure $e_{\lambda}$, $\lambda\in K$.
Denote by $\nabla$ the gradient operator.
\begin{theorem}\label{theo:main}
Consider a branching dynamics satisfying~\eqref{eq:def_varphi} which starts off a Poisson point process with intensity $e_{\lambda}$, where $\lambda\in\R^d$ is such that $\varphi(\lambda)=0$.
\begin{enumerate}
\item \label{p:main_1}
If $\langle \lambda,  \nabla \varphi(\lambda)\rangle>0$, then the point process $\pi_n$ converges weakly to some point process $\Pi_{e_{\lambda}}^{\chi}$ having intensity $e_{\lambda}$ (the branching dynamics is persistent).
\item \label{p:main_2}
If $\langle \lambda,  \nabla \varphi(\lambda)\rangle<0$, then the point process $\pi_n$ converges weakly to the empty point process (the branching dynamics suffers local extinction).
\end{enumerate}
\end{theorem}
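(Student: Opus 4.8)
The plan is to analyze the Laplace functional of $\pi_n$ via the classical branching/cluster recursion and to identify the limiting behavior through a random-walk interpretation of the generating iteration. For a nonnegative test function $f$ with bounded support, write $G_\chi(f)=\E\bigl[\prod_{i}(1-f(u_i))\bigr]$ for the probability generating functional of the cluster $\chi$ (with points $u_i$), and set $v_0=f$, $v_{n+1}(x)=1-G_\chi(v_n(\,\cdot+x\,))$. Since $\pi_0$ is Poisson with intensity $e_\lambda$ and the clusters are placed independently, the Poisson formula gives
\begin{equation}\label{eq:laplace_recursion}
\E\Bigl[\prod_{p\in\pi_n}(1-f(p))\Bigr]=\exp\Bigl(-\int_{\R^d} v_n(x)\,e_\lambda(dx)\Bigr).
\end{equation}
So the whole problem reduces to understanding the behavior of the linear functional $x\mapsto\int v_n(x)\,e_\lambda(dx)$, equivalently of the sequence of subprobability-like functions $v_n$, as $n\to\infty$. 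Persistence means this integral converges to a finite positive limit (the Laplace functional of a nontrivial point process with the right intensity), local extinction means it converges to $0$.

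The first key step is a \emph{many-to-one / size-biasing} identity: I would introduce the tilted cluster intensity $\hat J_\lambda(du)=e^{\langle\lambda,u\rangle}J(du)$, which by $\varphi(\lambda)=0$ is a \emph{probability} measure on $\R^d$. Let $(S_n)_{n\ge0}$ be the random walk with i.i.d.\ increments distributed as $\hat J_\lambda$, started at a point distributed according to $e_\lambda$ restricted appropriately (this is where the $e_\lambda$-intensity and the Poisson structure combine cleanly). The linearization of the recursion — i.e.\ replacing $1-G_\chi(g)$ by its first-order term $\int g\,dJ$ — shows that $\int v_n\,de_\lambda$ is, to leading order, $\E[f(S_n)]$-type quantity, and the drift of $S_n$ under $\hat J_\lambda$ is exactly $\E[S_1-S_0]=\nabla\varphi(\lambda)$ (differentiating \eqref{eq:def_varphi} at $t=\lambda$). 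The sign condition $\langle\lambda,\nabla\varphi(\lambda)\rangle$ then governs whether the tilted walk drifts, in the relevant direction measured by $\lambda$, toward the region where $e_\lambda$ is large or small: when $\langle\lambda,\nabla\varphi(\lambda)\rangle<0$ the tilted walk escapes to where $e_\lambda$-mass is exponentially small, forcing $\int v_n\,de_\lambda\to0$ and hence local extinction; when $\langle\lambda,\nabla\varphi(\lambda)\rangle>0$ the walk is "recurrent enough" in that direction that the contributions stabilize.

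For the persistence half I would make this rigorous by a monotonicity-and-domination scheme. Because $0\le v_{n+1}\le\int v_n(\,\cdot+x)\,J(dx)$ pointwise (subadditivity of $1-G_\chi$), the sequence $m_n(x):=\int v_n(x+y)\,J(dy)$ dominates the nonlinear iterates; conversely a reverse bound $1-G_\chi(g)\ge\int g\,dJ-\frac12\iint g(u)g(u')\,\E[\chi(du)\chi(du')]$ (Bonferroni) controls the loss. One shows $\int v_n\,de_\lambda$ is Cauchy: the increments $\int(v_{n}-v_{n+1})\,de_\lambda$ are nonnegative and their sum telescopes; the sum is finite precisely because, under $\langle\lambda,\nabla\varphi(\lambda)\rangle>0$, a local-CLT / renewal estimate for the tilted walk $S_n$ gives $\sum_n \P(S_n\in A)<\infty$-type control after passing to the quadratic (interaction) term — this is the spatial analogue of transience and is exactly where $d\ge3$ entered in the homogeneous case, now replaced by the effective drift. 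Having shown convergence of the Laplace functionals to a limit of the form $\exp(-L(f))$ with $L$ a nontrivial functional, one identifies $L$ as the Laplace functional of a point process $\Pi^\chi_{e_\lambda}$, checks that its intensity is $e_\lambda$ (the linear term in $f$ is preserved by construction of $\lambda$), and invokes weak convergence of point processes via convergence of Laplace functionals on a convergence-determining class.

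The main obstacle is the persistence direction: controlling the nonlinear correction terms uniformly in $n$ and proving summability of the telescoped increments. This requires (i) a good enough local limit theorem or renewal-type bound for the tilted random walk with increment law $\hat J_\lambda$ under only the finiteness assumption \eqref{eq:def_varphi} — one may need to truncate or to use a weak form of the local CLT valid without a lattice/non-lattice dichotomy — and (ii) bounding the two-point correlation $\iint g(u)g(u')\,\E[\chi(du)\chi(du')]$ of the cluster, which a priori is not finite under \eqref{eq:def_varphi} alone; I expect one needs the tilted second factorial moment measure of $\chi$ to be locally finite, and this may have to be added as a hypothesis or derived from the structure of the problem. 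The local-extinction direction, by contrast, should follow fairly directly from the domination $v_n\le m_n$ and the fact that $\int m_n\,de_\lambda=\E_{e_\lambda}[f(S_n)]\to0$ when the tilted walk has drift pointing away from the bulk of $e_\lambda$, together with dominated convergence in \eqref{eq:laplace_recursion}.
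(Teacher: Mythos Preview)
Your approach via Laplace functionals and the tilted walk $\hat J_\lambda$ identifies the right probabilistic object (the paper calls it $D$), but there is a genuine gap in \emph{both} directions.

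\textbf{Local extinction.} The claimed domination argument fails outright. Iterating the first-moment bound $v_{n+1}(x)\le\int v_n(x+u)\,J(du)$ and integrating against $e_\lambda$ gives
\[
\int v_n\,de_\lambda \;\le\; \int\!\!\int f(x+u)\,J^{[n]}(du)\,e^{-\langle\lambda,x\rangle}\,dx
\;=\; e^{n\varphi(\lambda)}\int f\,de_\lambda \;=\; \int f\,de_\lambda,
\]
since $\varphi(\lambda)=0$. The linear upper bound is \emph{constant} in $n$, not decaying; this is exactly the statement that the intensity $e_\lambda$ is preserved in time. So ``$\int m_n\,de_\lambda\to 0$'' is false, and extinction cannot be deduced from first moments. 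The paper proceeds entirely differently: it shows (for $d=1$, then projects for $d\ge2$) that $\max\pi_n\to-\infty$ a.s., using that $\pi_n^*=\sum_i\delta_{U_i+\max\chi_n^{(i)}}$ is Poisson with intensity $\E[e^{\lambda\max\chi_n}]\,e_\lambda$, together with a Chernoff/large-deviations estimate (Lemma~\ref{lem:exp_moment_max}) showing $\E[e^{\lambda\max\chi_n}]$ decays like $e^{\beta_0\lambda n}$ with $\beta_0<0$ when $\varphi'(\lambda)<\beta_0$.

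\textbf{Persistence.} You correctly anticipate that your Bonferroni step requires control of the second factorial moment of $\chi$, which is \emph{not} assumed in the theorem; you flag this as a hypothesis you ``may have to add''. The paper avoids any second-moment assumption by using Kallenberg's backward-tree (Palm/spine) criterion: one builds the ancestral spine as the tilted random walk $-S_n$ with step law $D$, attaches sibling clusters via the reduced Palm measures $\chi^!_{(\xi_n)}$, and shows that the resulting ``relatives at time $0$'' measure $\rho=\sum_n\rho_n$ is a.s.\ locally finite. The summability $\sum_n\P[\rho_n(A)>0]<\infty$ comes from the law of large numbers $S_n/n\to\nabla\varphi(\lambda)$ combined with a Chernoff bound on $\max\chi_{n-1}$; only \emph{first} moments of (tilted) clusters enter, via $\E[Y_n]\le e^{\varphi(\lambda_{\pm\eps})+\varphi(0)}$. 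This is precisely what your generating-functional route cannot get without the extra moment input.
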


\subsection{The one-dimensional case}
It is instructive to consider the special case $d=1$. To exclude trivialities, we assume that $J$ is not concentrated on $[0,+\infty)$ or $(-\infty, 0]$. This implies that  $\lim_{t\to\pm\infty}\varphi(t)=+\infty$. The function $\varphi$ is strictly convex.

\begin{center}
\begin{picture}(400,80)
\setlength{\unitlength}{1pt}
\thicklines
\put(50,0){\vector(0,1){80}}
\put(0,30){\vector(1,0){100}}
\put(95,35){$\lambda$}
\put(55,75){$\varphi(\lambda)$}
\qbezier(0,70)(50,-56)(100,70)
\put(20,30){\circle*{7}}
\put(80,30){\circle*{7}}
\put(120,0)
{
\begin{picture}(100,100)
\put(20,0){\vector(0,1){80}}
\put(0,30){\vector(1,0){100}}
\put(95,35){$\lambda$}
\put(25,75){$\varphi(\lambda)$}
\qbezier(0,70)(50,-56)(100,70)
\put(20,30){\circle{7}}
\put(80,30){\circle*{7}}
\end{picture}
}
\put(240,0)
{
\begin{picture}(100,100)
\put(5,0){\vector(0,1){80}}
\put(0,30){\vector(1,0){100}}
\put(95,35){$\lambda$}
\put(10,75){$\varphi(\lambda)$}
\qbezier(0,70)(50,-56)(100,70)
\put(20,30){\circle{7}}
\put(80,30){\circle*{7}}
\end{picture}
}
\put(305, 75)
{
\begin{picture}(20,20)
\put(-24,8){\circle*{7}}
\put(-24,-2){\circle{7}}
\put(-17,6){\small persistence}
\put(-17,-4){\small local extinction}
\end{picture}
}
\end{picture}
\end{center}

Assume first that the cluster distribution $\chi$ is subcritical, that is, the mean number of offsprings of any particle is less than $1$. Note that this implies that the progeny of any individual particle dies out with probability $1$; see~\cite{athreya_book}. Let us see what happens to the population as a whole. Since the subcriticality means that $\varphi(0)<0$, the equation $\varphi(\lambda)=0$ has two solutions $\lambda_1<0$ and $\lambda_2>0$ which satisfy $\varphi'(\lambda_1)<0$ and $\varphi'(\lambda_2)>0$. By Theorem~\ref{theo:main}, both measures $e_{\lambda_1}$ and $e_{\lambda_2}$ give rise to a persistent branching dynamics.
In both cases we have a system of branching random walks which become extinct individually but survive as a collective.

Assume now  that the cluster distribution $\chi$ is critical, that is the mean number of offsprings of any particle is $1$. Equivalently, $\varphi(0)=0$. Again, the progeny of any individual particle dies out with probability $1$; see~\cite{athreya_book}.  If $\varphi'(0)=\int_{\R}u J(du)=0$, then the equation $\varphi(\lambda)=0$ has only one solution $\lambda=0$. It is known, see~\cite{kallenberg}, that the corresponding branching dynamics, starting off the Lebesgue measure, suffers local extinction.
Therefore, we assume  that $\int_{\R}u J(du)<0$ which means that the particles have a drift to the left. (The case $\int_{\R}u J(du)>0$ is analogous.)  Then, $\varphi(0)=0$ and $\varphi'(0)<0$. Hence, the equation $\varphi(\lambda)=0$ has two solutions $\lambda_1=0$ and $\lambda_2>0$.
By~\cite{kallenberg}, the first solution $\lambda_1=0$ gives rise to a branching dynamics which suffers local extinction. However, by Theorem~\ref{theo:main}, the second solution $\lambda_2$ corresponds to a persistent branching dynamics. Thus, a critical branching dynamics can survive provided that the particles have a non-zero drift and they have chosen their starting positions in a right way.

Assume finally that the cluster distribution $\chi$ is supercritical, i.e., the mean number of offsprings of any particle is greater than $1$. Equivalently, $\varphi(0)>0$.   There are $3$ possibilities. First, it may happen that the equation $\varphi(\lambda)=0$ has no solutions, which is not interesting to us. Second, it may happen that the equation $\varphi(\lambda)=0$ has two different solutions of the same sign. By Theorem~\ref{theo:main}, the solution which has larger distance  to $0$ gives rise to a persistent branching dynamics, whereas the branching dynamics corresponding to the remaining solution becomes locally extinct. Thus, a supercritical branching dynamics may suffer local extinction, even though the progeny of any particle survives with positive probability and grows exponentially on survival. Finally,  it may happen that the equation $\varphi(\lambda)=0$ has only one solution $\lambda_1$ such that $\varphi'(\lambda_1)=0$. This boundary case is not covered by Theorem~\ref{theo:main}. In Section~\ref{sec:boundary} we will show that the branching dynamics dies out in the boundary case if the underlying branching process is the branching Brownian motion, and conjecture that this holds for general branching random walks. The boundary case has been an object of recent interest due to its connection to the extremal process of the branching Brownian motion; see~\cite{lalley_sellke,brunet_derrida1,brunet_derrida2,arguin_etal3}.



A very simple special case arises if the cluster  $\chi$ consists a.s.\ of exactly $1$ particle (no branching). The persistence of the system is evident, since $\pi_n$ is a Poisson process with the same intensity $e_{\lambda}$ for every $n$. See~\cite{brown_resnick,kabluchko_schlather_dehaan,kabluchko,aizenman_ruzmaikina} for various types of exponential intensity particle systems without branching.

\section{Properties of the competing branching dynamics}
In the one-dimensional case $d=1$ the branching dynamics described in the introduction can be seen as a competition between branching particles.
In the sequel, we will study the properties of this competition always assuming  $d=1$.
\subsection{Notation}
Let $\MMM$ be the space of discrete point configurations (locally finite counting measures) on $\R$. A point process is a random element of $\MMM$.  Given a discrete point configuration $\rho=\sum_{i\in I}\delta_{V_i}\in\MMM$, where the index set $I$ is at most countable, we denote by $\max \rho:=\max_{i\in I} V_i$ the position of the right-most atom (the leader, the most successful particle) of $\rho$ (provided that it exists).  Also, for $u\in\R$ we denote by $T_u\rho=\sum_{i\in I}\delta_{u+V_i}$ the point configuration obtained by shifting every atom $V_i$ of $\rho$ by $u$. Let $\N_0=\{0,1,\ldots\}$ be the set of non-negative integers.

We fix some point process $\chi$ (the distribution of clusters) with intensity $J$ satisfying~\eqref{eq:def_varphi}. Note that the number of particles in $\chi$ is a.s.\ finite. A branching random walk with cluster distribution $\chi$ is a process which starts at time $0$ with a single particle located at the origin and in which every unit of time every particle is independently replaced by a cluster of daughter particles whose displacements with respect to the parent particle are governed by the point process $\chi$. We denote by $\chi_n$ the point process formed by the particles in the $n$-th generation of the branching random walk. By the Hammersley--Kingman--Biggins theorem~\cite{biggins1,biggins}, the maximum $\max \chi_n$ of a supercritical branching random walk $\chi_n$ satisfies
\begin{equation}\label{eq:biggins}
\lim_{n\to\infty}\frac 1n \max \chi_n=\beta_0 \;\;\;\text{a.s. on survival},
\end{equation}
where $\beta_0$ is the largest zero of $I(z)=\sup_{t\in\R} (zt -\varphi(t))$, the Legendre--Fenchel transform of $\varphi$. Note that $I$ is allowed to take the value $+\infty$.

 In the sequel, we denote by $\pi_0=\sum_{i=1}^{\infty}\delta_{U_i}$ a Poisson point process on $\R$ with intensity $e_{\lambda}(du)=e^{-\lambda u}du$. With this notation, the positions of the particles in the $n$-th generation of a branching dynamics starting off $\pi_0$ form a point process
\begin{equation}\label{eq:def_pi_n}
\pi_n=\sum_{i=1}^{\infty}T_{U_i}(\chi_n^{(i)}),
\end{equation}
where $\{\chi_n^{(i)}: n\in\N_0\}$, $i\in\N$, are copies of the branching random walk $\{\chi_n: n\in\N_0\}$, all objects being independent.

\subsection{The leading particle}
Let us look at the progeny of an individual particle in the branching dynamics $\{\pi_n:n\in\N_0\}$. We claim that even in the persistent case, the progeny of any particle with probability $1$ either dies out or drifts to $+\infty$ or $-\infty$.  Indeed, if the cluster distribution $\chi$ is subcritical or critical, then the progeny of any particle dies out a.s.; see~\cite{athreya_book}. If the cluster distribution is supercritical, then the progeny survives with positive probability. However, assuming for concreteness that $\lambda>0$ and recalling that $\varphi(\lambda)=0$, it is easy to see that the limit $\beta_0$ in~\eqref{eq:biggins} is negative. Hence, the progeny of any individual particle either dies out or drifts to $-\infty$ in a uniform way.

These considerations suggest that the leader of a persistent branching dynamics at time $n$, where $n$ is large, is most likely to be an offspring of some particle which had a very unfavorable position at time $0$ (we assume $\lambda>0$ here). The next proposition makes this precise.  Let $i_n$ denote the (random) number of the particle which is the ancestor at time $0$ of the leader of the system at time $n$. That is, $i_n$ is such that
$\max \pi_n= U_{i_n}+\max \chi_n^{(i_n)}$.
\begin{proposition}\label{prop:leader_stable}
Assume that $\varphi(\lambda)=0$, $\lambda>0$, $\varphi'(\lambda)>0$. Then, we have
$
\lim_{n\to\infty} \frac 1n U_{i_n}=-\varphi'(\lambda)<0
$
a.s.
\end{proposition}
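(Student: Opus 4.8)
The plan is to control $U_{i_n}/n$ from above and below separately, exploiting that $\pi_n$ is a Poisson cluster process: by~\eqref{eq:def_pi_n} its cluster centres $(U_i)$ form a Poisson point process of intensity $e_\lambda$, independently marked by i.i.d.\ copies of $\chi_n$. This gives two tools. The first is the Campbell/Mecke formula: for Borel $A\subseteq\R$ and $s\in\R$,
\[
\E\big[\#\{v\in\pi_n:\ v\text{ has position}\ge s\text{ and ancestor in }A\}\big]=\int_A\E\big[\chi_n([s-u,\infty))\big]\,e^{-\lambda u}\,du .
\]
The second is the void-probability identity $\P[\text{no cluster with centre in }A\text{ reaches }[s,\infty)]=\exp\!\big(-\int_A\P[\max\chi_n\ge s-u]\,e^{-\lambda u}\,du\big)$. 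A short substitution gives $\int_\R\P[\max\chi_n\ge s-u]\,e^{-\lambda u}\,du=\tfrac{e^{-\lambda s}}{\lambda}\,\E[e^{\lambda\max\chi_n}]$, and $\E[e^{\lambda\max\chi_n}]\le\E[\int_\R e^{\lambda x}\,\chi_n(dx)]=e^{n\varphi(\lambda)}=1$; with $A=\R$ this yields $\P[\max\pi_n<s]=\exp(-\tfrac{e^{-\lambda s}}{\lambda}\E[e^{\lambda\max\chi_n}])$. I will use persistence (Theorem~\ref{theo:main}\,(\ref{p:main_1}), applicable since $\lambda\varphi'(\lambda)>0$) only through one consequence: $\pi_n$ converges weakly to a nontrivial limit, so $\max\pi_n$ cannot tend to $-\infty$ in probability, which forces $\liminf_n\E[e^{\lambda\max\chi_n}]>0$; fix $\ell_0>0$ with $\E[e^{\lambda\max\chi_n}]\ge\ell_0$ for all large $n$.

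Write $c:=\varphi'(\lambda)>0$. For the upper bound $\limsup_n U_{i_n}/n\le -c$ I would fix $a<c$ and split the centres into $\mathcal L_n=\{i:U_i\le -an\}$, which contains the favourable region near $-cn$, and $\mathcal R_n=\{i:U_i>-an\}$. Feeding the Chernoff bound $\E[\chi_n([y,\infty))]\le e^{n\varphi(\theta\lambda)-\theta\lambda y}$ ($\theta\in(0,1)$) into the Campbell formula over $\mathcal R_n$ yields a bound $\le C\,e^{-\theta\lambda s}\,e^{n(\varphi(\theta\lambda)+(1-\theta)\lambda a)}$; the exponent vanishes at $\theta=1$ with $\theta$-derivative $\lambda(c-a)>0$ there, so some $\theta\in(0,1)$ makes its rate strictly negative. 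For the void probability over $\mathcal L_n$, the relevant integral equals $\tfrac{e^{-\lambda s}}{\lambda}\E[(e^{\lambda\max\chi_n}-e^{\lambda(s+an)})^+]$, and the many-to-one lemma gives $\E[e^{\lambda\max\chi_n}\mathbf 1(\max\chi_n<a'n)]\le\P[S_n<a'n]\to0$ for $a<a'<c$, where $S_n$ is a random walk with step law $e^{\lambda x}J(dx)$ (a probability measure, of mean $\varphi'(\lambda)=c$); together with $\E[e^{\lambda\max\chi_n}]\ge\ell_0$ this bounds that integral below by a constant multiple of $e^{-\lambda s}$ for $n$ large. The crucial step is then to let the threshold drift: take $s=s_n:=-\log n$. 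The first-moment bound over $\mathcal R_n$ only picks up a polynomial factor $n^{\theta\lambda}$ and stays summable; the void probability over $\mathcal L_n$ becomes $\exp(-c'n^{\lambda})$ with $c'>0$, also summable. By Borel--Cantelli, almost surely for all large $n$: no particle of $\pi_n$ above $s_n$ descends from $\mathcal R_n$, yet some particle of $\pi_n$ above $s_n$ descends from $\mathcal L_n$. On that event $\max\pi_n>s_n$ and its ancestor lies in $\mathcal L_n$, i.e.\ $U_{i_n}\le -an$; letting $a\uparrow c$ along a sequence gives the upper bound a.s.

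The lower bound $\liminf_n U_{i_n}/n\ge -c$ is the mirror image. Fix $b>c$ and split into $\mathcal R_n'=\{U_i\ge -bn\}$ (now the favourable side) and $\mathcal L_n'=\{U_i<-bn\}$. The first-moment estimate over $\mathcal L_n'$ uses the Chernoff bound with an exponent $t>\lambda$; the rate $\varphi(t)-(t-\lambda)b$ vanishes at $t=\lambda$ with derivative $c-b<0$, hence is negative for $t$ slightly above $\lambda$. The void probability over $\mathcal R_n'$ is handled via $\E[e^{\lambda\max\chi_n}]-\E[e^{\lambda\max\chi_n}\mathbf 1(\max\chi_n>b'n)]\ge\ell_0-\P[S_n>b'n]\to\ell_0$ for $c<b'<b$. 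With $s_n=-\log n$ and the same Borel--Cantelli argument, a.s.\ $U_{i_n}\ge -bn$ for all large $n$; letting $b\downarrow c$ and combining with the upper bound gives $U_{i_n}/n\to -c=-\varphi'(\lambda)<0$ a.s. (That $i_n$ is a.s.\ well defined, i.e.\ $\max\pi_n$ exists and is uniquely attained, follows routinely from the diffuseness of $e_\lambda$ and $\E[\pi_n([s,\infty))]=\tfrac{e^{-\lambda s}}{\lambda}\E[e^{\lambda\max\chi_n}]<\infty$.)

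The hard part is the passage from weak convergence to almost-sure statements. Persistence only tells us $\max\pi_n$ is tight, not that $\liminf_n\max\pi_n>-\infty$ a.s.; with a \emph{fixed} cutoff $s$ the event ``the favourable side reaches $[s,\infty)$'' would have failure probability that is bounded but not summable in $n$, so Borel--Cantelli could not be applied. Letting the cutoff drift to $s_n=-\log n$ repairs this, because the Poisson intensity governing that side's reach then grows like $n^{\lambda}$, making the failure probability super-exponentially small, while costing only a polynomial factor in the competing first-moment bound for the unfavourable side. The remaining ingredients — the many-to-one identity for $\chi_n$, the weak law of large numbers for the tilted walk $S_n$, and the Chernoff bounds for $\max\chi_n$ and for $\chi_n([y,\infty))$ — are routine.
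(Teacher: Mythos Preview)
Your argument is correct and follows essentially the same strategy as the paper's proof: a first-moment (Chernoff) estimate shows that ancestors outside a window around $-\varphi'(\lambda)n$ cannot produce descendants above a slowly drifting threshold, while a void-probability estimate (driven by $\liminf_n \E[e^{\lambda\max\chi_n}]>0$, which the paper obtains via Proposition~\ref{prop:exp_moments_max_chi_n}) shows that the leader does exceed that threshold; Borel--Cantelli then pins down $U_{i_n}/n$. The only differences are cosmetic: the paper takes the threshold $-\sqrt n$ and works directly with the Legendre transform $I$ through Lemma~\ref{lem:large_dev}, whereas you take $-\log n$ and route the same estimates through the many-to-one identity and the tilted random walk $S_n$.
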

In the supercritical or critical case, let $\beta_0$ be the largest zero of the Legendre--Fenchel transform $I$. In the subcritical case, we set $\beta_0=-\infty$. In the next proposition we show that the maximum of a supercritical branching dynamics $\pi_n$ suffering local extinction converges to $-\infty$ with the same asymptotic linear speed $\beta_0$ as the most successful offspring of a single particle.
\begin{proposition}\label{prop:unstab_speed}
Assume that $\varphi(\lambda)=0$, $\lambda>0$, $\varphi'(\lambda)<0$. Then, we have
$\lim_{n\to\infty} \frac 1n\max \pi_n =\beta_0$ a.s.
\end{proposition}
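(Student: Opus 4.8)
The plan is to prove $\limsup_{n\to\infty}\tfrac1n\max\pi_n\le\beta_0$ and $\liminf_{n\to\infty}\tfrac1n\max\pi_n\ge\beta_0$ almost surely. Two preliminary remarks. First, the standing hypotheses $\varphi(\lambda)=0$, $\lambda>0$, $\varphi'(\lambda)<0$ force $\varphi(0)>0$ (by strict convexity, $\varphi(0)\le 0$ would put the other relevant zero of $\varphi$ past the minimum, hence with positive derivative), so $\chi$ is supercritical and survives forever with some probability $q>0$; moreover $\beta_0$ is a finite \emph{negative} number, since $I(z)\ge z\lambda-\varphi(\lambda)=z\lambda$ forces every zero of $I$ to be $\le 0$, while $I(0)=-\min_t\varphi(t)>0$ (as $\varphi(\lambda)=0$ is not the minimum) excludes $\beta_0=0$. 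Second, a one-line Campbell computation using $\varphi(\lambda)=0$ gives $\E[\pi_n([a,\infty))]=e^{-\lambda a}/\lambda$ for every $n$, so the global first moment of $\pi_n([zn,\infty))$ does not decay and is useless; the remedy is to split the ancestral Poisson process $\pi_0$ at the origin.

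\emph{Upper bound.} Fix a rational $z\in(\beta_0,0)$. The ancestors $U_i>0$ are a.s.\ finite in number and lie below $\max_iU_i<\infty$; since $\tfrac1n\max_iU_i\to0$ and each individual line of descent either dies out or, by~\eqref{eq:biggins}, has its generation-$n$ maximum growing like $\beta_0n$, one gets $\limsup_n\tfrac1n\max_{i:\,U_i>0}(U_i+\max\chi_n^{(i)})\le\beta_0<z$ almost surely, so no descendant of a positive-position ancestor exceeds $zn$ for $n$ large. For the ancestors $U_i\le 0$ I use the first moment: the event that some such ancestor has a descendant above $zn$ is contained in $\{\sum_{i:\,U_i\le 0}\chi_n^{(i)}([zn-U_i,\infty))\ge 1\}$, whose probability is at most $\int_{-\infty}^0 e^{-\lambda u}\,\E[\chi_n([zn-u,\infty))]\,du$ by Campbell's formula. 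Since the intensity of $\chi_n$ has log-Laplace transform $n\varphi$, Markov's inequality gives $\E[\chi_n([mn,\infty))]\le e^{-nI(m)}$ whenever $m\ge\varphi'(0)$, and here $z-u/n\ge z>\beta_0>\varphi'(0)$, so the integral is bounded by $\int_{-\infty}^0 e^{-\lambda u}e^{-nI(z-u/n)}\,du$.

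The decisive point is the inequality $I'(z)>\lambda$ for all $z>\beta_0$. With $g(t):=\varphi(t)-t\varphi'(t)$ one has $g'(t)=-t\varphi''(t)$, so $g$ is strictly decreasing on $(0,\infty)$; since $g(\lambda)=-\lambda\varphi'(\lambda)>0$ while $g$ vanishes at the positive point $t_+$ at which the tangent to $\varphi$ passes through the origin (so that $\beta_0=\varphi'(t_+)$ is the largest zero of $I$), it follows that $\lambda<t_+$, hence $I'(z)=(\varphi')^{-1}(z)>t_+>\lambda$ for $z>\beta_0$. Convexity of $I$ then yields $I(z+c)\ge I(z)+I'(z)c$, so with $u=-cn$,
\begin{equation*}
\int_{-\infty}^0 e^{-\lambda u}e^{-nI(z-u/n)}\,du=n\int_0^\infty e^{n(\lambda c-I(z+c))}\,dc\le n\,e^{-nI(z)}\int_0^\infty e^{-n(I'(z)-\lambda)c}\,dc=\frac{e^{-nI(z)}}{I'(z)-\lambda},
\end{equation*}
which is summable in $n$ because $I(z)>0$. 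Borel--Cantelli then shows that a.s.\ no descendant of a non-positive-position ancestor exceeds $zn$ for $n$ large; combining this with the previous step and intersecting over rational $z\in(\beta_0,0)$ gives $\limsup_n\tfrac1n\max\pi_n\le\beta_0$. I expect this half to be the main obstacle: one has to see that the global first moment fails (its expectation being carried by atypically far-right ancestors that are a.s.\ absent), and that the fix is to isolate the finitely many rightmost ancestors — treated pathwise via~\eqref{eq:biggins} — from the infinitely many far-left ones, where the strict gap $I'(z)>\lambda$, itself a consequence of $\varphi(\lambda)=0$ and $\varphi'(\lambda)<0$, is precisely what makes the restricted first moment summable.

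\emph{Lower bound.} Fix $\eps,\delta>0$ and set $M_n:=\#\{i:\,U_i\ge-\delta n\}$, a Poisson variable with mean $e^{\lambda\delta n}/\lambda$; by the Poisson lower-tail estimate and Borel--Cantelli, a.s.\ $M_n\ge\tfrac1{2\lambda}e^{\lambda\delta n}$ for all large $n$. Conditionally on $\pi_0$, the events $\{\max\chi_n^{(i)}\ge(\beta_0-\eps)n\}$ over those $i$ with $U_i\ge-\delta n$ are independent with common probability $p_n:=\P[\max\chi_n\ge(\beta_0-\eps)n]$, and $\liminf_n p_n\ge q>0$ by~\eqref{eq:biggins}; if one of these events occurs then $\max\pi_n\ge-\delta n+(\beta_0-\eps)n$. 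Hence, for $n$ large,
\begin{equation*}
\P\big[\max\pi_n<(\beta_0-\eps-\delta)n\big]\le\E\big[(1-q/2)^{M_n}\big]\le(1-q/2)^{e^{\lambda\delta n}/(2\lambda)}+\P\big[M_n<\tfrac1{2\lambda}e^{\lambda\delta n}\big],
\end{equation*}
which is summable in $n$; Borel--Cantelli gives $\liminf_n\tfrac1n\max\pi_n\ge\beta_0-\eps-\delta$ a.s., and letting $\eps,\delta\downarrow 0$, together with the upper bound, completes the proof.
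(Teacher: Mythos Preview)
Your proof is correct, but the paper takes a shorter route for the upper bound. Rather than splitting ancestors by sign, the paper observes that the process of family maxima $\pi_n^*:=\sum_i \delta_{U_i+\max\chi_n^{(i)}}$ is itself Poisson on $\R$ with intensity $\E[e^{\lambda\max\chi_n}]\,e^{-\lambda u}\,du$, whence
\[
\P[\max\pi_n>\beta n]\le \E[\pi_n^*((\beta n,\infty))]=\tfrac1\lambda\,e^{-\lambda\beta n}\,\E[e^{\lambda\max\chi_n}],
\]
and then invokes Lemma~\ref{lem:exp_moment_max} (applicable since $\varphi'(\lambda)<\beta_0$) to get $\frac1n\log\E[e^{\lambda\max\chi_n}]\to\beta_0\lambda$, giving a summable right-hand side for $\beta>\beta_0$. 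Your ancestor split together with the convexity inequality $I'(z)>\lambda$ on $(\beta_0,\infty)$ achieves the same estimate by hand: the observation $I'(\beta_0)=t_+>\lambda$ you derive is precisely the dual form of the hypothesis $\varphi'(\lambda)<\beta_0$ that makes Lemma~\ref{lem:exp_moment_max} applicable, and your integral bound $\int_{-\infty}^0 e^{-\lambda u}e^{-nI(z-u/n)}\,du\le e^{-nI(z)}/(I'(z)-\lambda)$ is essentially the content of that lemma computed directly. The paper's gain is that the Poisson structure of $\pi_n^*$ absorbs your entire ancestor-position integral into a single exponential moment, so no split (and no separate pathwise treatment of the finitely many $U_i>0$) is needed. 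Your diagnosis that the first moment of $\pi_n$ itself is useless is right, but the paper's remedy is to pass to $\pi_n^*$ rather than to truncate $\pi_0$. For the lower bound the paper simply cites~\eqref{eq:biggins}, since a single a.s.\ surviving family already gives $\liminf_n\tfrac1n\max\pi_n\ge\beta_0$; your $M_n$-based argument is correct but more elaborate than necessary.
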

Let us mention also the following proposition which will be obtained as a simple corollary of Theorem~\ref{theo:main}.
\begin{proposition}\label{prop:exp_moments_max_chi_n}
Let $\{\chi_n: n\in \N_0\}$ be a branching random walk with cluster distribution $\chi$ which satisfies~\eqref{eq:def_varphi}. Let $t\geq 0$.
\begin{enumerate}
\item If $\varphi'(t)>\beta_0$, then $c(t):=\lim_{n\to\infty} e^{-\varphi(t)n} \E [e^{t \max \chi_n}]$ exists in $(0,\infty)$.
\item If $\varphi'(t)<\beta_0$, then $c(t):=\lim_{n\to\infty} e^{-\varphi(t)n} \E [e^{t \max \chi_n}]=0$.
\end{enumerate}
\end{proposition}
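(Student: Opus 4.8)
The plan is to reduce the statement about the exponential moments $\E[e^{t\max\chi_n}]$ to an application of Theorem \ref{theo:main}, via a change-of-measure trick that interprets $e^{-\varphi(t)n}\E[e^{t\max\chi_n}]$ as (essentially) the probability that a suitable persistent or locally extinct branching dynamics places a particle in a half-line. Fix $t\ge 0$ and suppose first $\varphi(t)\neq 0$; we may normalize by considering the modified cluster distribution obtained from $\chi$ by the standard Biggins-type exponential tilting. Concretely, observe that $e^{-\varphi(t)n}\E[e^{t\max\chi_n}] = e^{-\varphi(t)n}\E[\int_\R e^{tu}\,\mathbf 1\{u = \max\chi_n\}\,\chi_n(du)] \le e^{-\varphi(t)n}\E[\int_\R e^{tu}\,\mathbf 1\{\chi_n([u,\infty)) = \chi_n(\{u\})\}\,\chi_n(du)]$, and more usefully that the upper bound $e^{t\max\chi_n} = t\int_{-\infty}^{\max\chi_n}e^{ts}\,ds + 1 \le 1 + t\int_\R e^{ts}\mathbf 1\{\chi_n([s,\infty))\ge 1\}\,ds$ exhibits $\E[e^{t\max\chi_n}]$ in terms of the quantities $\P[\chi_n([s,\infty))\ge 1]$, i.e.\ in terms of the survival of a particle to the right of $s$.

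The cleaner route, which I would actually carry out, is the following. Take $\lambda = t$ and consider the branching dynamics $\pi_n$ started off a Poisson point process $\pi_0$ with intensity $e_\lambda$; but rather than require $\varphi(\lambda)=0$ exactly, note that the general ``weight'' identity
\begin{equation}\label{eq:weight_identity}
\E\!\left[\pi_n([0,\infty))\right] = \int_\R e^{-\lambda u}\,\E\!\left[\chi_n([-u,\infty))\right]du = \int_\R e^{\lambda v}\,\E\!\left[\chi_n((-\infty,v]\text{-reflected})\right]\cdots
\end{equation}
relates moments of $\max\chi_n$ to intensities of $\pi_n$. More precisely, by Fubini and the displacement structure \eqref{eq:def_pi_n}, $\E[\pi_n([0,\infty))] = \int_\R e^{-\lambda u}\E[\chi_n([-u,\infty))]\,du$, and since $\E[\chi_n([-u,\infty))] = \E[\sum_{v\in\chi_n}\mathbf 1\{v\ge -u\}]$, interchanging integrals gives $\int_\R e^{-\lambda u}\E[\chi_n([-u,\infty))]\,du = e^{-\lambda\cdot}*\cdots$, whose evaluation produces $\lambda^{-1}\E[\sum_{v\in\chi_n}e^{\lambda v}] = \lambda^{-1}e^{\varphi(\lambda)n}$ for $\lambda>0$. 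This is the intensity computation underlying \eqref{eq:deny}; it shows intensities alone cannot distinguish the two cases, so one must instead track $\max\chi_n$ rather than the full point process. I would therefore replace $\pi_n([0,\infty))$ by $\mathbf 1\{\max\pi_n\ge 0\}$ (presence of \emph{some} leading particle in the half-line), whose probability is controlled by Theorem \ref{theo:main}: in the persistent case $\langle\lambda,\nabla\varphi(\lambda)\rangle = \lambda\varphi'(\lambda)>0$ we get $\P[\max\pi_n\ge 0]\to\P[\max\Pi^\chi_{e_\lambda}\ge 0]\in(0,1)$, while in the locally extinct case it tends to $0$.

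The key steps in order: (i) use $e^{t\max\chi_n}= 1 + t\int_{-\infty}^{\max\chi_n}e^{ts}\,ds$ and Fubini to write $e^{-\varphi(t)n}\E[e^{t\max\chi_n}] = e^{-\varphi(t)n} + t\,e^{-\varphi(t)n}\int_\R e^{ts}\,\P[\max\chi_n\ge s]\,ds$; (ii) recognize, via \eqref{eq:def_pi_n} with $\lambda=t$, that $e^{-\varphi(t)n}\int_\R e^{ts}\P[\max\chi_n\ge s]\,ds = t^{-1}e^{-\varphi(t)n}\int_\R e^{-\lambda u}\P[U=u\text{ contributes a particle }\ge 0]\,du$ equals (up to the known normalizing factor absorbing $e^{\varphi(t)n}$, which is where $\varphi(t)=0$ versus $\neq 0$ is handled by first tilting $\chi$ so that the tilted log-Laplace transform vanishes at $0$, replacing $t$ by $0$ and $\beta_0$ by $\beta_0-\varphi'(t)$) a quantity of the form $\P[\max\tilde\pi_n\ge 0]$ for a branching dynamics $\tilde\pi_n$ whose tilted parameter $\tilde\lambda=0$ has $\tilde\lambda\tilde\varphi'(\tilde\lambda)$ of the same sign as $\varphi'(t)-\beta_0$; (iii) apply Theorem \ref{theo:main} (persistence when $\varphi'(t)>\beta_0$, local extinction when $\varphi'(t)<\beta_0$) to conclude convergence of $\P[\max\tilde\pi_n\ge 0]$ to a strictly positive constant, respectively to $0$; (iv) transfer monotone/dominated convergence back through the $ds$-integral to obtain the limit $c(t)$. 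I expect the main obstacle to be step (ii)–(iii): making the ``change of measure = tilting of $\chi$'' precise so that the condition $\varphi'(t)\gtrless\beta_0$ becomes exactly the sign condition $\tilde\lambda\tilde\varphi'(\tilde\lambda)\gtrless 0$ of Theorem \ref{theo:main} (this requires the identity $\tilde\beta_0 = \beta_0 - \varphi'(t)$ relating the Biggins speed of the tilted walk to that of the original, together with care that the tilted initial intensity is genuinely of the form $e_{\tilde\lambda}$), and justifying the interchange of the $ds$-integral with the $n\to\infty$ limit — for the persistent case this needs a uniform-integrability bound on $\max\chi_n$ coming from the exponential moment $\varphi$ is finite on all of $\R$, and for the extinction case a dominating function built from Proposition \ref{prop:unstab_speed}'s speed estimate.
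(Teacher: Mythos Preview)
Your high-level strategy---reduce to Theorem~\ref{theo:main}---matches the paper's, but the execution in steps (ii)--(iii) has a genuine gap. You propose to ``tilt $\chi$ so that the tilted log-Laplace transform vanishes at $0$'' and then work with $\tilde\lambda=0$. This cannot succeed: whatever $\tilde\varphi$ is, $\tilde\lambda\tilde\varphi'(\tilde\lambda)=0$ when $\tilde\lambda=0$, so you land exactly in the boundary case that Theorem~\ref{theo:main} does \emph{not} cover. Moreover, a Biggins-type exponential tilt changes the law of the whole cluster, so $\max\tilde\chi_n$ is not a deterministic transform of $\max\chi_n$; there is no identity relating $\E[e^{t\max\chi_n}]$ to an exponential moment of the tilted walk. (Your formula $\tilde\beta_0=\beta_0-\varphi'(t)$ is also wrong: the exponentially tilted walk is critical, hence $\tilde\beta_0=\tilde\varphi'(0)=\varphi'(t)$.)

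The paper's route is much shorter and avoids these problems with two observations. First, when $\varphi(t)=0$ and $t>0$, the points $U_i+\max\chi_n^{(i)}$ form a Poisson process with intensity $\E[e^{t\max\chi_n}]\,e^{-tu}du$, so directly
\[
\P[\max\pi_n\le z]=\exp\Bigl\{-\tfrac1t\,\E[e^{t\max\chi_n}]\,e^{-tz}\Bigr\};
\]
persistence (Theorem~\ref{theo:main}, using that $\varphi(t)=0$, $\varphi'(t)>\beta_0$ forces $t\varphi'(t)>0$) then makes the left side converge in $(0,1)$, which immediately gives $\E[e^{t\max\chi_n}]\to c(t)\in(0,\infty)$---no $ds$-integral, and hence no interchange-of-limits to justify. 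Second, for $\varphi(t)\neq 0$ the paper adds a deterministic \emph{drift} (a shift, not a tilt): $\tilde\chi_n=T_{-\varphi(t)n/t}\chi_n$ has $\tilde\varphi(t)=0$ at the \emph{same} parameter $t$, so $\tilde\lambda=t>0$, and one checks $\tilde\varphi'(t)>\tilde\beta_0\iff\varphi'(t)>\beta_0$; since $\E[e^{t\max\tilde\chi_n}]=e^{-\varphi(t)n}\E[e^{t\max\chi_n}]$, the first case applies verbatim. Part~2 is immediate from Lemma~\ref{lem:exp_moment_max} (which gives $\frac1n\log\E[e^{t\max\chi_n}]\to\beta_0 t<\varphi(t)$), not from Proposition~\ref{prop:unstab_speed}.
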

\begin{remark}
We conjecture that for $\varphi'(t)=\beta_0$ the limit is $0$. The proof in the case of the binary branching Brownian motion will be given below; see Eqn.~\eqref{eq:exp_moment_boundary}.
\end{remark}
The next proposition shows that the maximal particle of the point process $\Pi_{e_{\lambda}}^{\chi}$ has a shifted Gumbel distribution.
\begin{proposition}\label{prop:max_equilibrium_gumbel}
Assume that $\varphi(\lambda)=0$, $\lambda>0$, $\varphi'(\lambda)>0$. Let $c(\lambda)$ be as in Proposition~\ref{prop:exp_moments_max_chi_n}. Then,
$$
\P[\max \Pi_{e_{\lambda}}^{\chi} \leq z]=\exp\left\{-\frac{c(\lambda)}{\lambda}e^{-\lambda z}\right\},\;\;\; z\in\R.
$$
\end{proposition}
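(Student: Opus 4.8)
The plan is to compute the law of $\max\pi_n$ explicitly from the Poisson structure of $\pi_0$, to pass to the limit with the help of Proposition~\ref{prop:exp_moments_max_chi_n}, and finally to transfer the conclusion to $\Pi_{e_{\lambda}}^{\chi}$ using the weak convergence $\pi_n\Rightarrow\Pi_{e_{\lambda}}^{\chi}$ of Theorem~\ref{theo:main} together with a uniform bound on the number of particles far to the right.

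First I would condition on $\pi_0=\sum_i\delta_{U_i}$. Set $F_n(x):=\P[\max\chi_n\le x]$, with the convention $\max\chi_n=-\infty$ when $\chi_n$ is empty. Since $\max\pi_n=\sup_i\bigl(U_i+\max\chi_n^{(i)}\bigr)$ and the $\chi_n^{(i)}$ are independent given $\pi_0$, we obtain for every $z\in\R$
\[
\P[\max\pi_n\le z]
=\E\Bigl[\prod_i F_n(z-U_i)\Bigr]
=\exp\Bigl\{-\int_{\R}\bigl(1-F_n(z-u)\bigr)\,e^{-\lambda u}\,du\Bigr\}
\]
by the formula for the probability generating functional of a Poisson point process. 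Substituting $v=z-u$ and invoking the elementary identity $\E\bigl[e^{\lambda\max\chi_n}\bigr]=\lambda\int_{\R}\P[\max\chi_n>v]\,e^{\lambda v}\,dv$, which holds because $\lambda>0$ so that $e^{\lambda v}\to0$ as $v\to-\infty$, this rewrites as
\[
\P[\max\pi_n\le z]=\exp\Bigl\{-\frac{e^{-\lambda z}}{\lambda}\,\E\bigl[e^{\lambda\max\chi_n}\bigr]\Bigr\}.
\]
The expectation is finite: $e^{\lambda\max\chi_n}\le\sum_{x\in\chi_n}e^{\lambda x}$, and, the mean measure of $\chi_n$ being the $n$-fold convolution of $J$, one has $\E\bigl[\sum_{x\in\chi_n}e^{\lambda x}\bigr]=\bigl(\int_{\R}e^{\lambda u}\,J(du)\bigr)^{n}=e^{n\varphi(\lambda)}=1$ by~\eqref{eq:def_varphi}.

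Next I would let $n\to\infty$. As $\lambda>0$, $\varphi(\lambda)=0$ and $\varphi'(\lambda)>0$, the discussion preceding Proposition~\ref{prop:leader_stable} gives $\beta_0<0$, hence $\varphi'(\lambda)>\beta_0$, so Proposition~\ref{prop:exp_moments_max_chi_n}(1) with $t=\lambda$ applies and yields $\E\bigl[e^{\lambda\max\chi_n}\bigr]\to c(\lambda)\in(0,\infty)$ (here $\varphi(\lambda)=0$ is used to drop the factor $e^{-\varphi(\lambda)n}$). Therefore $\P[\max\pi_n\le z]\to\exp\{-\tfrac{c(\lambda)}{\lambda}e^{-\lambda z}\}$ for every $z\in\R$.

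It remains to identify this limit with $\P[\max\Pi_{e_{\lambda}}^{\chi}\le z]$, which is the only genuinely delicate step: the functional $\rho\mapsto\max\rho$ is not vaguely continuous since mass could a priori escape to $+\infty$, so weak convergence alone does not transfer distributional limits of the maximum. I would rule this out with a uniform tail estimate: each $\pi_n$ has intensity $e_{\lambda}$, whence $\P[\pi_n((M,\infty))\ge1]\le\E[\pi_n((M,\infty))]=\tfrac{1}{\lambda} e^{-\lambda M}$ for all $n$ and $M$. Fixing $z$ and $M>z$ and decomposing $\{\max\pi_n\le z\}=\{\pi_n((z,M])=0\}\cap\{\pi_n((M,\infty))=0\}$, this gives
\[
\P[\pi_n((z,M])=0]-\tfrac{1}{\lambda} e^{-\lambda M}\ \le\ \P[\max\pi_n\le z]\ \le\ \P[\pi_n((z,M])=0].
\]
Since $(z,M]$ is bounded and $z,M$ are a.s.\ not atoms of $\Pi_{e_{\lambda}}^{\chi}$ (its intensity $e_{\lambda}$ being diffuse), Theorem~\ref{theo:main} gives $\P[\pi_n((z,M])=0]\to\P[\Pi_{e_{\lambda}}^{\chi}((z,M])=0]$. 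Letting first $n\to\infty$ and then $M\to\infty$, so that $\P[\Pi_{e_{\lambda}}^{\chi}((z,M])=0]\downarrow\P[\max\Pi_{e_{\lambda}}^{\chi}\le z]$ and $\tfrac{1}{\lambda} e^{-\lambda M}\to0$, squeezes $\P[\max\Pi_{e_{\lambda}}^{\chi}\le z]$ between $\exp\{-\tfrac{c(\lambda)}{\lambda}e^{-\lambda z}\}$ and itself, yielding the claim. The main obstacle is precisely this last transfer step; all that precedes it is bookkeeping with the Poisson functional together with a direct appeal to Proposition~\ref{prop:exp_moments_max_chi_n}.
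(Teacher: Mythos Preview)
Your argument is correct and follows the same route as the paper: both derive the identity
\[
\P[\max\pi_n\le z]=\P[\pi_n((z,\infty))=0]=\exp\Bigl\{-\tfrac{1}{\lambda}\,\E[e^{\lambda\max\chi_n}]\,e^{-\lambda z}\Bigr\}
\]
and pass to the limit using Proposition~\ref{prop:exp_moments_max_chi_n} (which in the paper is in fact obtained simultaneously from this very identity together with persistence). The one place you go beyond the paper is the transfer step: the paper simply writes ``Proposition~\ref{prop:max_equilibrium_gumbel} follows by taking the limit $n\to\infty$'' and implicitly appeals to persistence to get $\P[\pi_n((z,\infty))=0]\to\P[\Pi_{e_\lambda}^{\chi}((z,\infty))=0]$, whereas you justify this carefully via the uniform tail bound $\E[\pi_n((M,\infty))]=\lambda^{-1}e^{-\lambda M}$ and a sandwich with the bounded-set void probabilities. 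Your extra care here is well placed, since $(z,\infty)$ is unbounded and weak convergence of point processes does not automatically propagate to such sets; the paper leaves this point unspoken.
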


\subsection{Equilibrium states and their family structure}\label{subsec:equilibrium_and_family}
A point process $\Xi$ on $\R$ is called equilibrium state for the cluster distribution $\chi$ if the branching dynamics starting at $\Xi$ is stationary in time. With other words, if $\sum_{i=1}^{\infty}\delta_{V_i}$ is a realization of $\Xi$ and $\chi^{(i)}$, $i\in\N$, are copies of the point process $\chi$ (all objects being independent), then the point process $\sum_{i=1}^{\infty} T_{V_i}\chi^{(i)}$ has the same law as $\Xi$. Clearly, the point process $\Pi_{e_{\lambda}}^{\chi}$ is an equilibrium state  for every $\lambda\in K_{\text{st}}$, where $K_{\text{st}}=\{\lambda\in\R: \varphi(\lambda)=0, \lambda\varphi'(\lambda)>0\}$.  Moreover, let $\Pi_{ce_{\lambda}}^{\chi}$ be the limiting point process for the branching dynamics started off the Poisson point process with intensity $ce_{\lambda}$, where $c>0$ and $\lambda\in K_{\text{st}}$. Then, any mixture of the form
\begin{equation}\label{eq:mixture}
\Xi(\cdot)=\int_{(0,\infty)\times K_{\text{st}}} \Pi_{c e_{\lambda}}^{\chi}(\cdot) \mu(dc,d\lambda),
\end{equation}
where  $\mu$ is a finite measure  on the space $(0,\infty)\times K_{\text{st}}$,  is an equilibrium state. The next theorem describes the set of equilibrium states in the non-boundary case. A related result for systems without branching can be found~\cite{aizenman_ruzmaikina}. Recall that a measure $J$ on $\R$ is called non-lattice if it is not concentrated on an arithmetic progression of the form $a\Z+b$, where $a,b\in\R$.
\begin{theorem}\label{theo:equilibrium_states}
Let $\chi$ be a cluster distribution satisfying~\eqref{eq:def_varphi} and having a non-lattice intensity measure $J$.   Assume that the equation $\varphi(\lambda)=0$ has two different solutions.
Then, any equilibrium state $\Xi$ whose intensity measure is finite on bounded sets can be represented as a mixture of the form~\eqref{eq:mixture}.
\end{theorem}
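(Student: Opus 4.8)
The plan is to compute the Laplace functional $L_\Xi(f)=\E[\exp(-\langle\Xi,f\rangle)]$ of $\Xi$, for continuous compactly supported $f\ge 0$ (writing $\langle\rho,g\rangle=\int g\,d\rho$), by iterating the defining fixed-point relation of an equilibrium state, and then to read off the mixture structure. First the intensity: applying $\E[\cdot]$ to the equilibrium relation $\Xi\eqdistr\sum_i T_{V_i}\chi^{(i)}$, and using that $J$ is finite on bounded sets, gives $J*\nu=\nu$ for the intensity $\nu$ of $\Xi$; since $J$ is non-lattice and the strictly convex $\varphi$ has exactly the two zeros, so that $K=\{\lambda\in\R:\varphi(\lambda)=0\}=\{\lambda_1,\lambda_2\}$, Deny's representation~\eqref{eq:deny_mixture} forces $\nu=c_1e_{\lambda_1}+c_2e_{\lambda_2}$ with $c_1,c_2\ge 0$.

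Next, set $\Phi[g](v):=-\log\E[\exp(-\langle T_v\chi,g\rangle)]\ge 0$ and $h_n:=\Phi^{\circ n}[f]$. By the branching property $h_n(v)=-\log\E[\exp(-\langle T_v\chi_n,f\rangle)]$, and the equilibrium relation is exactly $\E[\exp(-\langle\Xi,g\rangle)]=\E[\exp(-\langle\Xi,\Phi[g]\rangle)]$ for all $g\ge 0$; applying it successively with $g=f,h_1,h_2,\dots$ yields
\[
L_\Xi(f)=\E\big[\exp(-\langle\Xi,h_n\rangle)\big],\qquad n\ge 0 .
\]
From $h_n(v)\le\|f\|_\infty J^{*n}(\operatorname{supp}f-v)$ one gets $h_n\to 0$ pointwise (Chernoff bound; $\inf\varphi<0$ because $\varphi$ has two zeros) and, since $\varphi(\lambda_j)=0$, the uniform bound $\langle e_{\lambda_j},h_n\rangle\le\|f\|_\infty\int_{\operatorname{supp}f}e^{-\lambda_j u}\,du$; hence $\E[\langle\Xi,h_n\rangle]=\langle\nu,h_n\rangle$ is bounded, so $\langle\Xi,h_n\rangle$ is tight. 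Moreover, writing $g_n:=1-e^{-h_n}$, Theorem~\ref{theo:main} gives $\langle e_\lambda,g_n\rangle\to\ell_\lambda(f):=-\log L_{\Pi_{e_\lambda}^{\chi}}(f)\in[0,\infty)$ for $\lambda\in K_{\text{st}}$, and $\langle e_\lambda,g_n\rangle\to 0$ for $\lambda\in K\setminus K_{\text{st}}$ (local extinction, part~(\ref{p:main_2})); a routine comparison of $h_n$ and $g_n$ (they differ by $O(g_n^2)$ off the ``bulk'' $B_n:=\{v:I(-v/n)<0\}$, and $e_{\lambda_j}(B_n)\to 0$) upgrades these to the same limits with $h_n$, so $\E[\langle\Xi,h_n\rangle]\to\sum_{\lambda\in K_{\text{st}}}c_\lambda\ell_\lambda(f)$.

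The substantive step is to identify the distributional limit of $\langle\Xi,h_n\rangle$. The measure $h_n(v)\,dv$ escapes to infinity: by a Cramér/Laplace estimate (in line with Proposition~\ref{prop:leader_stable}) its $e_\lambda$-weighted mass concentrates, for each $\lambda\in K_{\text{st}}$, in a window around $v\approx-n\varphi'(\lambda)$ (which recedes to $-\infty$ if $\lambda>0$, to $+\infty$ if $\lambda<0$), while $\Xi$ has, with probability $\to 1$, no atoms in $B_n$ (its $\nu$-mass there vanishes). Hence $\langle\Xi,h_n\rangle$ equals, up to $o_\P(1)$, a sum of $o(1)$-small contributions from the atoms of $\Xi$ sitting in those receding windows. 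Two ingredients finish the identification: (i) the $e_\lambda$-renormalised atom counts of $\Xi$ in the windows converge almost surely to nonnegative random variables $C_\lambda=C_\lambda(\Xi)$ (a law-of-large-numbers / martingale statement for equilibrium states); and (ii) the sparse imprint left in a fixed bounded set by the $\chi_n$-clusters descending from these atoms converges, by a ``sparse thinning $\Rightarrow$ Cox'' argument, to a mixed Poisson cluster process with canonical intensity $C_\lambda e_\lambda$ whose log-Laplace contribution per unit of $C_\lambda$ is precisely $\ell_\lambda(f)$ — the last point being the identification of the emergent cluster law with the canonical (KLM) measure underlying $\Pi_{e_\lambda}^{\chi}$. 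Therefore, jointly in $f$,
\[
\langle\Xi,h_n\rangle\ \xrightarrow{d}\ \sum_{\lambda\in K_{\text{st}}}C_\lambda\,\ell_\lambda(f),
\qquad\text{so}\qquad
L_\Xi(f)=\E\Big[\exp\Big(-\sum_{\lambda\in K_{\text{st}}}C_\lambda\,\ell_\lambda(f)\Big)\Big]
\]
by bounded convergence in the displayed identity. Since a point process is determined by its Laplace functional, $\Xi\eqdistr\sum_{\lambda\in K_{\text{st}}}\Pi_{C_\lambda e_\lambda}^{\chi}$ (the components for distinct $\lambda$ being independent given the $C_\lambda$), i.e.\ $\Xi$ has the form~\eqref{eq:mixture} with $\mu$ the law of $(C_\lambda)_{\lambda\in K_{\text{st}}}$. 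Finally, taking expectations recovers $\nu=\sum_{\lambda\in K_{\text{st}}}\E[C_\lambda]e_\lambda$, and comparing with the first step (the $e_\lambda$, $\lambda\in K$, being linearly independent) gives $c_\lambda=0$ for $\lambda\in K\setminus K_{\text{st}}$ and $\E[C_\lambda]=c_\lambda$ otherwise — so the unstable zero, when present, carries no mass.

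The main obstacle is the third step: establishing the almost-sure convergence of the renormalised tail atom-counts $C_\lambda(\Xi)$ of a general, non-stationary equilibrium state — an ergodic theorem suited to the exponential-intensity setting, where the translation group acts on the family $\{\Pi_{ce_\lambda}^{\chi}:c>0\}$ by rescaling $c$ — and running the sparse-thinning/Poissonisation uniformly enough in $f$ to pin down both the Cox intensity $C_\lambda e_\lambda$ and the emergent cluster law. A softer approach to the overall structure would be Choquet theory (the equilibrium states with a prescribed intensity bound form a weakly compact convex set, hence each is a barycentre of extremal ones), but identifying the extremal equilibrium states as exactly the $\Pi_{ce_\lambda}^{\chi}$, $c>0$, $\lambda\in K_{\text{st}}$, still rests on the same localisation estimates.
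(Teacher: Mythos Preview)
Your route is completely different from the paper's and, as you correctly flag, has a real gap. The paper's proof is essentially a citation: it invokes the general equilibrium theory of Liemant--Matthes--Wakolbinger~\cite{main_book}. By Theorem~2.3.6 there, the intensity $\nu$ of any equilibrium state must be \emph{stable} (the Poisson-started dynamics with intensity $\nu$ must converge to a limit with intensity $\nu$); combined with Theorem~\ref{theo:main}, this already forces $\nu$ to be a nonnegative combination of the $e_\lambda$ with $\lambda\in K_{\text{st}}$. Then Theorem~4.7.28 of~\cite{main_book} yields the mixture representation~\eqref{eq:mixture} once two hypotheses are verified: \emph{aperiodicity} of the tilted backward walk $D$ (automatic for non-lattice $J$, via~\cite{matthes_etal_book}) and the \emph{regularity} condition $\nu\{z:J^{[n]}(A-z)>\eps\}\to 0$, which the paper disposes of with a one-line Chernoff bound. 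All of the structural work --- the Choquet-type decomposition and the identification of extremal equilibrium states --- lives inside~\cite{main_book}; the paper only checks hypotheses.

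Your step~(i), the a.s.\ existence of the limits $C_\lambda(\Xi)$ for a \emph{general} equilibrium state $\Xi$, is not a technicality but essentially the theorem itself. For a Poisson initial configuration one has the additive martingale; for an unknown $\Xi$ there is no martingale or ergodic theorem to invoke without already knowing that $\Xi$ decomposes over the $\lambda$'s, so the argument is circular as stated. Step~(ii) likewise needs uniform null-array control of the individual summands $h_n(V_i)$ conditioned on $\Xi$, which again asks for a~priori tail information about $\Xi$ that the equilibrium relation alone does not supply. The Choquet alternative you mention at the end \emph{is} precisely what~\cite{main_book} carries out, and the identification of extremal states there rests on the regularity condition above rather than on the large-deviations localisation you outline. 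In short, your Laplace-functional iteration $L_\Xi(f)=\E[e^{-\langle\Xi,h_n\rangle}]$ is correct and the limiting picture you describe is the right one, but turning it into a self-contained proof would amount to redeveloping the machinery the paper simply cites.
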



Consider a branching dynamics $\{\tilde \pi_n:n\in\N_0\}$ starting off a point process $\Pi_{e_{\lambda}}^{\chi}$. That is, $\tilde \pi_n=\sum_{i=1}^{\infty} T_{V_i}(\chi_n^{(i)})$, where $\sum_{i=1}^{\infty}\delta_{V_i}$ is a realization of $\Pi_{e_{\lambda}}^{\chi}$ and $\{\chi_n^{(i)}: n\in\N_0\}$, $i\in\N$, are copies of the branching random walk $\{\chi_n:n\in\N_0\}$, all objects being independent.  Being stationary, the $\MMM$-valued process $\{\tilde \pi_n:n\in\N_0\}$ admits a two-sided stationary extension denoted by $\{\tilde \pi_n: n\in\Z\}$.
Let $\mathcal F_{\leq n}$ be the $\sigma$-algebra generated by the random variables $\tilde \pi_k(B)$, where $B\subset \R$ is a bounded interval and $k=n, n-1,\ldots$. The $\sigma$-algebra of infinite past $\mathcal F_{-\infty}$ is defined by  $\mathcal F_{-\infty}=\cap_{n\in\Z}\mathcal F_{\leq n}$.
\begin{proposition}\label{prop:k_property}
Assume that $\varphi(\lambda)=0$, $\lambda\varphi'(\lambda)>0$.
Then, the $\sigma$-algebra $\mathcal F_{-\infty}$ is trivial, that is, every event in $\mathcal F_{-\infty}$ has probability $0$ or $1$. As a consequence, the process $\{\tilde \pi_n: n\in\Z\}$ is ergodic and mixing.
\end{proposition}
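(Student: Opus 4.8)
The plan is to realize the two-sided stationary process $\{\tilde\pi_n:n\in\Z\}$ explicitly in terms of an ancestral structure and then show that the tail $\sigma$-algebra $\mathcal F_{-\infty}$ is contained in a $0$--$1$ law $\sigma$-algebra. Concretely, I would first give a backward construction of $\{\tilde\pi_n\}$. Since $\Pi_{e_\lambda}^\chi$ arose as the weak limit of $\pi_N$ started from a Poisson$(e_\lambda)$ process, one expects (and should verify using the cluster representation and the persistence in Theorem \ref{theo:main}) that $\tilde\pi_n$ can be written as $\tilde\pi_n=\sum_{j} T_{W_j}(\chi_{n-(-m)}^{(j)})$ for the particles $W_j$ of a Poisson process $\pi_{-m}$ with intensity $e_\lambda$ sitting at a time $-m$ far in the past, in the limit $m\to\infty$; more cleanly, there is a marked Poisson point process $\Psi=\sum_j \delta_{(m_j,W_j,\omega_j)}$ on $\N_0\times\R\times(\text{cluster paths})$, where $m_j$ is the birth generation, $W_j$ the birth location (Poisson$(e_\lambda)$ in space, counting measure in $m_j$), and $\omega_j$ an independent branching-random-walk family issued from $W_j$ at time $-m_j$, such that $\tilde\pi_n=\sum_{j:\, -m_j\le n} T_{W_j}(\chi^{(j)}_{\,n+m_j}(\omega_j))$. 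Establishing this representation rigorously is the first step.

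Next, because $\Psi$ is a Poisson point process, its own tail $\sigma$-algebra (generated by restrictions of $\Psi$ to $\{m\ge M\}\times\R\times(\cdots)$, intersected over $M$) is trivial by the Kolmogorov $0$--$1$ law for Poisson processes. I would then argue that $\mathcal F_{\le n}\subset\sigma\big(\Psi|_{\{m\ge -n\}}\big)\vee\mathcal N$ up to null sets, because a particle contributing to $\tilde\pi_k$ for some $k\le n$ must have been born at a generation $-m_j\le n$, i.e.\ $m_j\ge -n$. Hence $\mathcal F_{-\infty}=\bigcap_n\mathcal F_{\le n}$ is contained in the Poisson tail $\sigma$-algebra $\bigcap_M\sigma(\Psi|_{\{m\ge M\}})$, which is trivial; this gives the triviality of $\mathcal F_{-\infty}$. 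The passage from ``contribution to $\tilde\pi_k$'' to ``born at generation $\ge -n$'' is where one has to be a little careful: one must check that no ``new'' randomness enters $\tilde\pi_k$ beyond the births recorded in $\Psi$ up to time $k$ and the associated displacement variables, which is immediate from the explicit formula once Step 1 is in place.

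Finally, once $\mathcal F_{-\infty}$ is trivial, ergodicity and mixing of $\{\tilde\pi_n:n\in\Z\}$ under the time-shift follow from general principles: a stationary process whose shift is a factor of a process with trivial tail (equivalently, a stationary Markov-type process with trivial tail $\sigma$-algebra) is mixing, hence ergodic. More precisely, I would invoke that for a stationary sequence the invariant $\sigma$-algebra is contained in $\mathcal F_{-\infty}$ (it is shift-invariant, hence measurable with respect to $\mathcal F_{\le n}$ for every $n$), so triviality of $\mathcal F_{-\infty}$ forces triviality of the invariant $\sigma$-algebra, i.e.\ ergodicity; and mixing follows because $\mathcal F_{\le 0}$ and $\mathcal F_{\ge n}$ become asymptotically independent as $n\to\infty$ precisely when the common refinement's tail is trivial, which is a standard consequence (one can also deduce mixing directly from the Poisson representation, since $\Psi|_{\{m\ge -n\}}$ and $\Psi|_{\{m< -n\}}$ are independent and the second piece is what governs the far future after a long time).

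The main obstacle is Step 1, the backward/ancestral Poisson representation of the stationary extension $\{\tilde\pi_n:n\in\Z\}$: one must show that the equilibrium state $\Pi_{e_\lambda}^\chi$, when run backward in time, really does look like the superposition of independent branching families emitted by a space-time Poisson process of ``immigrants'' with intensity (counting in time) $\otimes\, e_\lambda$ (in space), and that this superposition converges. This is essentially a Kallenberg-type backward construction of the Palm/cluster structure of the equilibrium, and it is the technical heart of the argument; the remaining steps are soft measure-theoretic consequences.
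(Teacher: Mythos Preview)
Your approach differs substantially from the paper's, which is essentially a one-line citation: the regularity condition~\eqref{eq:regularity} (already verified in the proof of Theorem~\ref{theo:equilibrium_states}) is shown in Theorem~4.7.19 of~\cite{main_book} to imply triviality of $\mathcal F_{-\infty}$ directly, with no explicit backward construction needed.

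Your Step~1, however, is not merely a technicality to be filled in --- the representation you write down is incorrect. You propose $\tilde\pi_n = \sum_{j:\,-m_j\le n} T_{W_j}\chi^{(j)}_{n+m_j}$, where the $(m_j,W_j)$ form a Poisson process on $\N_0\times\R$ with intensity (counting)$\,\otimes\, e_\lambda$. But this is a branching process \emph{with immigration}: at every time $-m$ a fresh Poisson$(e_\lambda)$ batch of founders arrives and begins to branch. Since $J^{[k]}*e_\lambda = e_\lambda$ for every $k$, each batch contributes intensity exactly $e_\lambda$ to $\tilde\pi_n$, and summing over the infinitely many batches with $-m\le n$ gives infinite intensity. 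The equilibrium $\Pi_{e_\lambda}^\chi$ has intensity $e_\lambda$, so it cannot be represented this way. The stationary branching dynamics has no immigration: every particle has a unique parent, ancestral lines extend to $-\infty$, and there is no finite ``birth generation'' $-m_j$ to record.

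A correct backward description must encode the full family structure (e.g.\ via the canonical measure of the infinitely divisible process as in Proposition~\ref{prop:maillard} and Lemma~\ref{lem:equiv_rho_gamma}) or proceed by a coupling-from-the-past argument starting from Poisson$(e_\lambda)$ at time $-m$ and letting $m\to\infty$. Neither route reduces $\mathcal F_{-\infty}$ to a Poisson tail in the straightforward way you sketch; in particular, in the coupling picture $\mathcal F_{\le n}$ is not determined by the restriction of any fixed Poisson process to times $\ge -n$, because the limit $m\to\infty$ has to be taken first. Your Steps~2--3 are sound once a valid representation is in hand, but the Kolmogorov $0$--$1$ shortcut you have in mind rests on a representation that does not exist.
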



Clearly, the union of two independent point processes distributed as $\Pi_{c_1e_{\lambda}}^{\chi}$ and $\Pi_{c_2e_{\lambda}}^{\chi}$ has the same law as $\Pi_{(c_1+c_2)e_{\lambda}}^{\chi}$, $c_1,c_2>0$. It follows that the point process $\Pi_{e_{\lambda}}^{\chi}$ is infinitely divisible; see~\cite[\S~1.6]{matthes_etal_book}.
Moreover, the point process $\Pi_{e_{\lambda}}^{\chi}$ is superposable in the following sense; see~\cite{maillard}. Recall that $T_{u}$ denotes a translation by $u\in \R$ acting on the space $\MMM$ of discrete point configurations on $\R$. Let $\Pi'$ and $\Pi''$ denote two independent copies of $\Pi_{e_{\lambda}}^{\chi}$. Then, for every $u_1,u_2,u\in\R$ such that $e^{\lambda u_1}+e^{\lambda u_2}=e^{\lambda u}$ we have the following equality in distribution:
\begin{equation}\label{eq:maillard}
T_{u_1}\Pi'+T_{u_2}\Pi''\eqdistr T_u\Pi_{e_{\lambda}}^{\chi}.
\end{equation}
To see that~\eqref{eq:maillard} holds, note that the Poisson point process with intensity $e_{\lambda}$ satisfies~\eqref{eq:maillard} and that superposability is preserved under clustering and taking weak limits.  Superposable point processes have been characterized in~\cite{maillard}. As a consequence of~\cite{maillard}, we obtain
\begin{proposition}\label{prop:maillard}
There is a unique in law point process $\Gamma$ (depending on $\lambda$) satisfying $\max \Gamma=0$ a.s.\ and a constant $c>0$ such that if $\Gamma_i$, $i\in\N$, are independent copies of $\Gamma$ and independently, $\sum_{i=1}^{\infty}\delta_{W_i}$ is a Poisson point process on $\R$ with intensity $ce_{\lambda}$, then
\begin{equation}\label{eq:maillard_prop}
\Pi_{e_{\lambda}}^{\chi} \eqdistr \sum_{i=1}^{\infty} T_{W_i}\Gamma_i.
\end{equation}
\end{proposition}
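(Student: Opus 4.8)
The plan is to invoke the characterization of superposable point processes from~\cite{maillard} directly. We have already observed that $\Pi_{e_{\lambda}}^{\chi}$ is superposable in the sense of~\eqref{eq:maillard}: the Poisson point process with intensity $e_{\lambda}$ satisfies~\eqref{eq:maillard}, and superposability is preserved both under the clustering operation (replacing each atom independently by a translated copy of $\chi$) and under weak limits, so it passes to $\Pi_{e_{\lambda}}^{\chi}=\lim_{n\to\infty}\pi_n$. Thus the first step is simply to record that $\Pi_{e_{\lambda}}^{\chi}$ falls into the class of point processes to which Maillard's structure theorem applies. To apply that theorem one must also check the nondegeneracy hypotheses it requires --- essentially that $\Pi_{e_{\lambda}}^{\chi}$ is a.s.\ nonempty, that it has no accumulation of mass at $+\infty$ so that $\max\Pi_{e_{\lambda}}^{\chi}$ exists a.s., and that it is not a deterministic shift of a single atom; these follow from Theorem~\ref{theo:main}\eqref{p:main_1} (the intensity is $e_{\lambda}$, hence locally finite and nonzero) together with Proposition~\ref{prop:max_equilibrium_gumbel}, which gives that $\max\Pi_{e_{\lambda}}^{\chi}$ has a genuine (shifted Gumbel) distribution, in particular is a.s.\ finite.

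The second step is to translate Maillard's conclusion into the form~\eqref{eq:maillard_prop}. Maillard's theorem states that any superposable point process $\Pi$ (with respect to the exponential weight $e^{\lambda \cdot}$, and satisfying the nondegeneracy conditions) is a \emph{decorated Poisson point process}: there is a point process $\Gamma$, normalized by $\max\Gamma=0$ a.s., a constant $c>0$, and a Poisson point process $\sum_{i}\delta_{W_i}$ of intensity $c e_{\lambda}$ on $\R$, all independent, such that $\Pi\eqdistr\sum_{i}T_{W_i}\Gamma_i$ with $\Gamma_i$ i.i.d.\ copies of $\Gamma$. Applying this with $\Pi=\Pi_{e_{\lambda}}^{\chi}$ yields~\eqref{eq:maillard_prop} immediately. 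The normalization $\max\Gamma=0$ is exactly what pins down $\Gamma$ and $c$ uniquely: if $\sum_i T_{W_i}\Gamma_i$ is the atomic decomposition, the atoms $W_i$ are recovered (in law) as the positions of the cluster maxima, and the decoration seen from its own maximum is $\Gamma$; uniqueness of $c$ then follows from matching intensities, since the intensity of $\sum_i T_{W_i}\Gamma_i$ equals $c$ times $e_{\lambda}$ convolved with the intensity of $\Gamma$, and both this intensity and $e_{\lambda}$ are fixed. One should also note that $\Gamma$ and $c$ depend on $\lambda$, as stated; different $\lambda\in K_{\text{st}}$ give genuinely different limiting processes.

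The only real content beyond citation is the verification of the nondegeneracy hypotheses needed to enter Maillard's framework, and the bookkeeping that his normalization coincides with $\max\Gamma=0$. I expect the main (minor) obstacle to be making sure that $\Pi_{e_{\lambda}}^{\chi}$ really has a well-defined rightmost atom and that the superposability equation~\eqref{eq:maillard} survives the weak limit --- for the latter one checks that the map sending a point configuration to $T_{u_1}\cdot+T_{u_2}\cdot'$ is continuous in the vague topology and uses that weak convergence of $\pi_n$ to $\Pi_{e_{\lambda}}^{\chi}$ is preserved under independent superposition and translation. Neither presents a genuine difficulty; the substance of the statement is entirely carried by the result of~\cite{maillard}, and the proof is therefore short.
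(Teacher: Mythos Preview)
Your proposal is correct and follows essentially the same approach as the paper: establish superposability of $\Pi_{e_{\lambda}}^{\chi}$ (via the Poisson process and preservation under clustering and weak limits) and then invoke Maillard's characterization~\cite{maillard}. In fact, the paper's own argument is even more terse --- it simply states ``As a consequence of~\cite{maillard}, we obtain'' the proposition --- so your explicit verification of the nondegeneracy hypotheses and the uniqueness bookkeeping is more detailed than what the paper provides.
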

It is convenient to think of~\eqref{eq:maillard_prop} as of the decomposition of the point process $\Pi_{e_\lambda}^{\chi}$ into the families $T_{W_i}\Gamma_i$, $i\in\N$; see~\cite[\S~1.6]{main_book}. If we would continue the branching dynamics starting off $\Pi_{e_\lambda}^{\chi}$ back into the past, then any two particles alive at time $0$ belong to the same family $T_{W_i}\Gamma_i$ if and only if they have a common ancestor somewhere in the past.
\begin{proposition}\label{prop:family_finite}
Assume that $\varphi(\lambda)=0$, $\lambda>0$, $\varphi'(\lambda)>0$. If the cluster distribution $\chi$ is critical, assume additionally that $\E[\chi(\R)^2]$ is finite and $\P[\chi(\R)=1]<1$.
Then, the point process $\Gamma$ consists  of finitely many particles a.s.\ if and only if the cluster distribution $\chi$ is subcritical.  
\end{proposition}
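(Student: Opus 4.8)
The plan is to pin down the law of the family $\Gamma$ explicitly and then read off the dichotomy from classical Galton--Watson asymptotics. First I would reduce to the family of the leader. In the representation $\Pi_{e_\lambda}^\chi\eqdistr\sum_iT_{W_i}\Gamma_i$ of Proposition~\ref{prop:maillard} the clusters $\Gamma_i$ are i.i.d.\ copies of $\Gamma$, independent of the Poisson process $\sum_i\delta_{W_i}$, and satisfy $\max\Gamma_i=0$ a.s.; hence the rightmost atom of $\Pi_{e_\lambda}^\chi$ equals $\max_iW_i$, is a.s.\ attained by a unique family, and, the maximising index being a function of $(W_i)$ alone, that family has the law of $\Gamma$. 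Thus $\Gamma(\R)$ is distributed as the number of particles in the family of the rightmost atom of $\Pi_{e_\lambda}^\chi$. Using $\pi_n\Rightarrow\Pi_{e_\lambda}^\chi$ (Theorem~\ref{theo:main}), whose families are the clusters $T_{U_i}(\chi_n^{(i)})$ of \eqref{eq:def_pi_n}, together with Proposition~\ref{prop:leader_stable} ($U_{i_n}\sim-\varphi'(\lambda)n$), I would identify $\Gamma$, recentred so that its maximum is $0$, as the weak limit, as $n\to\infty$, of the cluster $\chi_n$ conditioned to have a particle near $\varphi'(\lambda)n$; equivalently, of $\chi_n$ size-biased by $e^{\lambda\max\chi_n}$ and recentred at its leader. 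Since $\varphi'(\lambda)$ lies strictly above the a.s.\ growth rate $\beta_0$ of $\max\chi_n$, this is a genuine large-deviation conditioning.

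Next I would describe this conditioned cluster through a spine decomposition. The atypical event is carried by a single distinguished lineage performing the random walk with step law $e^{\lambda u}J(du)$ (a probability measure because $\varphi(\lambda)=0$, with positive drift $\varphi'(\lambda)$), along which the offspring point process is $\chi$ size-biased by the weight $\int e^{\lambda u}\,\chi(du)$, while every off-spine subtree is an independent ordinary copy of the branching random walk $\{\chi_k\}$. Letting $n\to\infty$ and recentring at the leader, $\Gamma$ then has the law of the time-$0$ population of a process made of a backward spine $\dots,\xi_{-2},\xi_{-1},\xi_0$ off which, at each level $-k$ with $k\ge1$, there hang $\widehat N_k-1$ independent ordinary branching random walks (the $\widehat N_k$ being i.i.d.\ copies of the size-biased offspring count), the batch born at $-k$ contributing its generation-$k$ population. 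In particular $\Gamma(\R)\eqdistr 1+\sum_{k\ge1}\sum_{l=1}^{\widehat N_k-1}Z_k^{(k,l)}$, where the $Z_k^{(k,l)}$ are independent, $Z_k^{(k,l)}\eqdistr\chi_k(\R)$, and independent of the $\widehat N_k$; displacements play no role in the count.

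The dichotomy is now governed by $q_k:=\P[\chi_k(\R)>0]$. If $\chi$ is subcritical then $m:=\E[\chi(\R)]<1$, so $q_k\le\E[\chi_k(\R)]=m^k$ is summable; using the standing hypothesis \eqref{eq:def_varphi} to bound the mean of the size-biased offspring count, the expected number of levels whose batch contains a surviving subtree is finite, hence a.s.\ only finitely many batches are nonempty, each of finite size, and $\Gamma(\R)<\infty$ a.s. If $\chi$ is critical with $\E[\chi(\R)^2]<\infty$ and $\P[\chi(\R)=1]<1$, then $\E[\chi_k(\R)]=1$ and Kolmogorov's estimate gives $q_k\sim2/(\sigma^2k)$ (this is where the second moment enters), so $\sum_kq_k=\infty$; moreover criticality with $\P[\chi(\R)=1]<1$ forces $\P[\chi(\R)\ge2]>0$, hence $\P[\widehat N_k\ge2]>0$, so the probability that the $k$th batch contains a surviving subtree is at least $\P[\widehat N_k\ge2]\,q_k$, and these sum to $\infty$. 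As the batches are independent, the second Borel--Cantelli lemma yields infinitely many nonempty batches, whence $\Gamma(\R)=\infty$ a.s. The hypothesis $\P[\chi(\R)=1]<1$ cannot be dropped: if $\chi$ consists a.s.\ of one particle then each family is a single atom and $\Gamma(\R)=1$ although $\chi$ is (degenerately) critical. Finally, if $\chi$ is supercritical the same spine picture applies with supercritical off-spine subtrees, which survive and grow with positive probability, so again $\Gamma(\R)=\infty$ a.s.; this gives the asserted equivalence.

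I expect the main obstacle to be the rigorous version of the second step, namely identifying the law of $\Gamma$ with the spine process above: one must show that the family of the leader of $\pi_n$, recentred at $\max\pi_n$, converges to the claimed limit in a sense strong enough to control the atom count (for instance locally, together with exponential tightness of its mass to the left), and that the conditioned cluster really decomposes into a $\varphi'(\lambda)$-speed distinguished lineage carrying size-biased offspring with ordinary bushes off it. A workable route exploits the Poisson structure of $\pi_0$: conditioning $\pi_n$ on carrying an atom near $0$ makes the family of that atom a size-biased cluster whose ancestor, by Proposition~\ref{prop:leader_stable}, sits near $-\varphi'(\lambda)n$, and the change of measure by the one-step weight $\int e^{\lambda u}\,\chi(du)$ (legitimate since $\varphi(\lambda)=0$) produces, in the limit, the spine decomposition. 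A secondary point is the moment bound used in the subcritical case, which is where one needs the finiteness of $\varphi$ throughout $\R$; should that estimate not be immediate, it can be replaced by a direct Borel--Cantelli bound on the number of surviving off-spine bushes, using only the geometric summability of the $q_k$ and a mild tail condition on $\chi(\R)$.
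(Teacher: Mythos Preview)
Your Galton--Watson dichotomy (third paragraph) is exactly the content of the paper's Lemma~\ref{lem:rho_finite}, and your ``spine process'' is precisely the backward tree $\rho=\sum_{n\ge 0}\rho_n$ already constructed in Section~\ref{subsec:kallenberg}: the spine steps are the $\xi_n$ with law $D(du)=e^{\lambda u}J(du)$, the size-biased offspring at each spine vertex is the Palm cluster $\chi_{(\xi_n)}$, and the off-spine bushes are the ordinary copies $\chi_{n-1}^{(k,n)}$. So the computational core of your argument coincides with the paper's.

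Where you diverge is in linking $\Gamma$ to this spine object. You propose to realise $\Gamma$ as the weak limit of the family of the leader of $\pi_n$, and then to identify that limit with the spine process via a change of measure; you correctly flag this as the main obstacle, since weak convergence $\pi_n\Rightarrow\Pi_{e_\lambda}^\chi$ does not by itself control the family of the leader (an unbounded functional that may lose mass at $-\infty$), nor does it yield an equality in law $\Gamma(\R)\eqdistr 1+\sum_{k,l}Z_k^{(k,l)}$. The paper bypasses this entirely: Lemma~\ref{lem:equiv_rho_gamma} shows, using only that $\Pi_{e_\lambda}^\chi$ is infinitely divisible and that the Palm measures of its L\'evy measure $\Psi$ are the laws of the shifted $\rho$ (Theorem~2.4.4 of \cite{main_book}), that $\P[\Gamma\in A]=0\Leftrightarrow\P[\rho\in A]=0$ for every translation-invariant Borel $A\subset\MMM$. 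No limit, no recentring, and no distributional identity between $\Gamma$ and $\rho$ are needed---only null-set equivalence on the event $\{\beta(\R)=\infty\}$, which is translation-invariant. This gives a two-line proof of the proposition once Lemmas~\ref{lem:rho_finite} and~\ref{lem:equiv_rho_gamma} are in place. Your route, if carried through, would yield the stronger statement that $\Gamma$ and $\rho$ (suitably recentred) agree in law, but at the cost of the convergence argument you describe; the paper's Palm/L\'evy approach trades that extra information for a much shorter proof of exactly what is required.
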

We have already noticed that even in the persistent branching dynamics the offsprings of any particle converge to $-\infty$ in a uniform way  or die out. The next proposition says that a similar conclusion applies to the progeny of any family. Let $\varrho_n^{(i)}$ be the progeny of the family $T_{W_i}\Gamma_i$ at time $n$ in the stationary branching dynamics starting off $\Pi_{e_{\lambda}}^{\chi}$.
\begin{proposition}\label{prop:family_drift}
Assume that $\varphi(\lambda)=0$, $\lambda>0$, $\varphi'(\lambda)>0$. If the cluster distribution $\chi$ is critical, assume additionally that $\E[\chi(\R)^2]$ is finite.
If $\chi$ is critical or supercritical, then
$$
\P[\forall i\in\N:  \lim_{n\to\infty} \max \varrho_n^{(i)}=-\infty]=1.
$$
\end{proposition}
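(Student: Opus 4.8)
The plan is to reduce the statement, via the family decomposition of Proposition~\ref{prop:maillard}, to the degeneracy of the additive (Biggins) martingale of a single branching random walk. Since the event in question is a countable intersection over $i\in\N$, it suffices to fix one index $i$ and prove that $\max\varrho_n^{(i)}\to-\infty$ a.s. Write the $i$-th family as $T_{W_i}\Gamma_i=\sum_j\delta_{W_i+X_j}$, so that $\varrho_n^{(i)}=\sum_j T_{W_i+X_j}(\chi_n^{(i,j)})$, where the $\{\chi_n^{(i,j)}:n\in\N_0\}$, $j\in\N$, are independent copies of $\{\chi_n:n\in\N_0\}$, independent of the family.

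The first step is to pass to the $\lambda$-weighted mass $Y_n^{(i)}:=\sum_{v\in\varrho_n^{(i)}}e^{\lambda v}=\int_{\R}e^{\lambda u}\,\varrho_n^{(i)}(du)$. Since $\lambda>0$, one has $\max\varrho_n^{(i)}\le\lambda^{-1}\log Y_n^{(i)}$ (with the usual convention when $\varrho_n^{(i)}$ is empty), so it is enough to show $Y_n^{(i)}\to0$ a.s. Using $\varphi(\lambda)=0$ and the branching rule $\varrho_{n+1}^{(i)}=\sum_{v\in\varrho_n^{(i)}}T_v\chi^{(v)}$, a one-line computation gives $\E[Y_{n+1}^{(i)}\mid\mathcal F_n]=Y_n^{(i)}e^{\varphi(\lambda)}=Y_n^{(i)}$; hence $Y_n^{(i)}$ is a non-negative martingale and converges a.s.\ to some $Y_\infty^{(i)}\in[0,\infty)$, and it remains to identify this limit as $0$.

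Next, decompose $Y_n^{(i)}=\sum_j c_j Z_n^{(i,j)}$ with weights $c_j:=e^{\lambda(W_i+X_j)}$ and $Z_n^{(i,j)}:=\int_{\R}e^{\lambda w}\,\chi_n^{(i,j)}(dw)$; conditionally on the family, the $Z_n^{(i,j)}$ are independent (in $j$) copies of the additive martingale $Z_n:=\int_{\R}e^{\lambda w}\,\chi_n(dw)$ of a single branching random walk, which satisfies $\E Z_n=1$ for all $n$ because $\varphi(\lambda)=0$. Two facts are needed. (a) \emph{The weights are a.s.\ summable:} matching the intensity of the right-hand side of~\eqref{eq:maillard_prop} with $e_\lambda$ through the Campbell formula gives $c\int_{\R}e^{\lambda u}\,\E[\Gamma(du)]=1$, whence $C_i:=\sum_j c_j<\infty$ a.s.\ (indeed $\E[C_i\mid W_i]=c^{-1}e^{\lambda W_i}$). (b) \emph{The single-walk martingale degenerates:} from $\varphi(\lambda)=0$, $\lambda>0$, $\varphi'(\lambda)>0$ we get $\lambda\varphi'(\lambda)-\varphi(\lambda)=\lambda\varphi'(\lambda)>0$, so $\lambda$ lies strictly beyond the boundary exponent in Biggins' martingale convergence theorem~\cite{biggins1,biggins}, and hence $Z_n\to0$ a.s.\ despite $\E Z_n\equiv1$. (In the critical case the extra hypothesis $\E[\chi(\R)^2]<\infty$ enters only through the cited propositions that provide the family structure.)

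Finally, to turn the degeneracy of the individual $Z_n^{(i,j)}$ into that of $Y_\infty^{(i)}$, I would condition on the $i$-th family and, for each $N$, split $Y_n^{(i)}=A_n^{(N)}+B_n^{(N)}$ with $A_n^{(N)}=\sum_{j\le N}c_j Z_n^{(i,j)}$ and $B_n^{(N)}=\sum_{j>N}c_j Z_n^{(i,j)}$. By (b) the head $A_n^{(N)}$ is a finite sum of terms tending to $0$ a.s., so $A_n^{(N)}\to0$ and therefore $B_n^{(N)}=Y_n^{(i)}-A_n^{(N)}\to Y_\infty^{(i)}$ a.s. But $B_n^{(N)}$ is (conditionally on the family) a non-negative martingale with $\E[B_n^{(N)}\mid\text{family}]=\sum_{j>N}c_j$, so Fatou's lemma yields $\E[Y_\infty^{(i)}\mid\text{family}]\le\sum_{j>N}c_j$ for every $N$; letting $N\to\infty$ and invoking (a) forces $\E[Y_\infty^{(i)}\mid\text{family}]=0$, hence $Y_\infty^{(i)}=0$ a.s., hence $\max\varrho_n^{(i)}\to-\infty$ a.s., and intersecting over $i\in\N$ finishes the proof. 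The one genuinely delicate point is (b): the role of the hypothesis $\lambda\varphi'(\lambda)>0$ is precisely to place the exponent $\lambda$ strictly beyond the boundary case of Biggins' theorem, so that the natural size-martingale of the cluster process already vanishes in the limit; after that, the only remaining work is to upgrade this from one walk to the (in the critical and supercritical cases, infinite) superposition making up a single family, and there the a.s.\ finiteness of the $\lambda$-weighted intensity of $\Gamma$ from (a) is exactly what lets the truncation-plus-Fatou step close.
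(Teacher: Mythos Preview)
Your proof is correct and takes a genuinely different route from the paper. The paper does not pass through the additive martingale at all: it first establishes (Lemma~\ref{lem:palm_exp}) that $\int_{\R}e^{ru}\,\Gamma(du)<\infty$ a.s.\ for every $r>r_0$, where $r_0$ is the smallest non-negative root of $\varphi(r)=\varphi'(\lambda)r$, via the backward (Palm) construction of Section~\ref{subsec:kallenberg}; it then combines the Chernoff bound of Lemma~\ref{lem:large_dev} with this exponential-moment control to obtain $\sum_n\P[\max\gamma_n>-\eps n\mid\Gamma]<\infty$ a.s.\ for small $\eps>0$, and finishes by Borel--Cantelli. This yields more than you claim, namely a linear drift $\max\gamma_n\le-\eps n$ eventually, whereas your martingale argument gives only $\max\gamma_n\to-\infty$ with no rate. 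On the other hand, your approach is shorter and more conceptual: the only exponential-moment input you need is the single case $r=\lambda$ of Lemma~\ref{lem:palm_exp}, which you get in one line by matching intensities in~\eqref{eq:maillard_prop}, and the core of the argument is outsourced to the degeneracy direction of Biggins' theorem (which needs no moment hypothesis). One small point of care: $Y_n^{(i)}$ need not lie in $L^1$ unconditionally, since $\E[e^{\lambda W_i}]$ can be infinite, so the martingale property and the Fatou step should be read conditionally on the family $(W_i,\Gamma_i)$, as you in fact do in the final paragraph; equivalently, work with $\tilde Y_n^{(i)}:=e^{-\lambda W_i}Y_n^{(i)}$, which has unconditional mean $1/c$.
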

In particular, every family produces a.s.\ only finitely many leaders and finitely many particles above any fixed level. Also, it follows that there are a.s.\  infinitely many families (at time $0$) which manage to produce a leading particle at some time in the sequel. Let us stress that the convergence in Proposition~\ref{prop:family_drift} is not uniform in $i$, since otherwise the whole branching system would drift to $-\infty$ like in the case of local extinction.

\begin{remark}
If the cluster distribution $\chi$ allows only for integer-valued displacements of the particles, then it is natural to consider a branching dynamics on $\Z$ starting off a Poisson point process on $\Z$ with intensity $\sum_{u\in\Z}e^{-\lambda u} \delta_{u}$. Our results have straightforward analogues in this settings. The non-lattice condition in Theorem~\ref{theo:equilibrium_states} should be replaced by the assumption that $J$ is aperiodic.
\end{remark}

\subsection{The boundary case}\label{sec:boundary}
We consider a system of particles starting off a Poisson point process on $\R$ with intensity $e^{-\lambda u}du$, where $\lambda>0$, and performing independent branching Brownian motions with drift $-c$, where $c>0$. This means that each particle performs a standard Brownian motion with drift $-c$; after an exponential time it splits into two particles which behave in the same way as the original particle independently of each other. Branching Brownian motion is a continuous-time process, but its restriction to integer times belongs to the class of processes considered here.
It is readily checked that $\varphi(\lambda)=(\lambda^2/2)-c\lambda+1$. For $c>\sqrt 2$ the equation $\varphi(\lambda)=0$ has two different solutions $\lambda_{\pm}=c\pm \sqrt{c^2-2}$. By Theorem~\ref{theo:main} the measure $e_{\lambda_+}$ corresponds to a persistent branching dynamics, while the measure $e_{\lambda_-}$ corresponds to a branching dynamics which becomes locally extinct. In the boundary case $c=\sqrt 2$ there is only one solution $\lambda=\sqrt 2$. \citet{lalley_sellke} conjectured that in this case the branching dynamics should be persistent. We will prove that this is not the case.
\begin{theorem}\label{theo:boundary_unstable}
Consider a system of independent branching Brownian motions with drift $-\sqrt 2$ starting off the Poisson point process on $\R$ with intensity $e_{\sqrt 2}$. Denote by $\pi_n$ the point processes formed by the particles at time $n$.  Then, $\pi_n$ converges as $n\to\infty$ to the empty process.
\end{theorem}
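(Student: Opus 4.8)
The plan is to reduce Theorem~\ref{theo:boundary_unstable} to the single quantitative estimate $\E[e^{\sqrt 2\,\max\chi_n}]\to 0$ as $n\to\infty$, and then to establish that estimate via Bramson's precise asymptotics for the frontier of branching Brownian motion. Here $\varphi(\lambda)=\tfrac12(\lambda-\sqrt 2)^2$, so $\varphi(\sqrt 2)=0$, $\varphi'(\sqrt 2)=0$ and $\beta_0=0$: this is exactly the boundary case left open by Theorem~\ref{theo:main}, and the estimate above is the assertion $c(\sqrt 2)=0$ announced in the remark after Proposition~\ref{prop:exp_moments_max_chi_n}, specialised to binary branching Brownian motion. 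Only the estimate is BBM–specific; the reduction below works for any branching dynamics in the boundary regime $\lambda\varphi'(\lambda)=0$.

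\emph{Reduction.} The point process $\pi_n=\sum_i T_{U_i}(\chi_n^{(i)})$ is a Poisson cluster process driven by $\pi_0$, which has intensity $e_{\sqrt 2}$; since $\varphi(\sqrt 2)=0$, the intensity of $\pi_n$ equals $e_{\sqrt 2}$ for every $n$, so $\E[\pi_n([a,\infty))]=\tfrac1{\sqrt2}e^{-\sqrt 2 a}<\infty$. The void probabilities of a Poisson cluster process are explicit, and using $\{T_u\chi_n([a,\infty))>0\}=\{\max\chi_n\ge a-u\}$ together with the substitution $z=a-u$,
\[
\P\bigl[\pi_n([a,\infty))=0\bigr]=\exp\Bigl\{-\int_{\R}\P[\max\chi_n\ge a-u]\,e^{-\sqrt2 u}\,du\Bigr\}
=\exp\Bigl\{-\frac{e^{-\sqrt 2 a}}{\sqrt 2}\,\E\bigl[e^{\sqrt 2\,\max\chi_n}\bigr]\Bigr\}.
\]
Hence, once $\E[e^{\sqrt 2\,\max\chi_n}]\to 0$, we get $\P[\pi_n([a,\infty))=0]\to 1$ for every $a\in\R$, and since every compact $B\subset\R$ lies in some $[a,\infty)$ this gives $\P[\pi_n(B)=0]\to 1$ for all compact $B$, i.e.\ $\pi_n$ converges weakly to the empty process. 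Note that the first–moment bound $\E[e^{\sqrt 2\,\max\chi_n}]\le\E[\int_{\R}e^{\sqrt 2 v}\chi_n(dv)]=e^{n\varphi(\sqrt 2)}=1$ only yields boundedness; and although $\max\chi_n+\sqrt 2 n$ (the maximum of a driftless BBM) is tight, $e^{\sqrt 2(\cdot)}$ sits exactly at the critical exponent and the limiting randomly shifted Gumbel law has infinite $e^{\sqrt 2}$–moment, so there is no uniform–integrability shortcut and a genuine rate is needed.

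\emph{The estimate.} Write $G_n:=\max\chi_n+\sqrt 2 n$ and $m_n:=\sqrt 2 n-\frac{3}{2\sqrt 2}\log n$, so that $G_n-m_n$ is tight by Bramson's theorem. The substitution $z=y-\frac{3}{2\sqrt 2}\log n$ in the layer–cake formula gives
\[
\E\bigl[e^{\sqrt 2\,\max\chi_n}\bigr]=\sqrt 2\int_{\R}\P[\max\chi_n>z]\,e^{\sqrt 2 z}\,dz
=\sqrt 2\,n^{-3/2}\int_{\R}\P[G_n>m_n+y]\,e^{\sqrt 2 y}\,dy ,
\]
and I would bound the last integral on three ranges for a fixed $A>0$. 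For $y<0$, $\P[G_n>m_n+y]\le 1$ contributes $O(n^{-3/2})$. For $0\le y\le A\sqrt n$, Bramson's uniform tail bound $\P[G_n\ge m_n+y]\le C_A(1+y)e^{-\sqrt 2 y}$ (valid for all $n$ and all $0\le y\le A\sqrt n$, with $C_A$ depending only on $A$) contributes $\le\sqrt 2\,n^{-3/2}C_A\int_0^{A\sqrt n}(1+y)\,dy=O_A(n^{-1/2})$. For $y>A\sqrt n$, the many–to–one bound $\P[G_n\ge m_n+y]\le e^n\,\P[\sqrt n\,\mathcal N\ge m_n+y]$ ($\mathcal N$ standard normal), writing $b=y-\frac{3}{2\sqrt 2}\log n$ and using the Gaussian tail, cancels $e^n$ against $e^{-n}$ and leaves $\P[G_n\ge m_n+y]\,e^{\sqrt 2 y}\le\frac{n}{2\sqrt\pi}\,e^{-b^2/(2n)}$, so this range contributes at most $\frac{1}{\sqrt{2\pi}}\int_{A/2}^{\infty}e^{-v^2/2}\,dv$. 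Altogether
\[
\limsup_{n\to\infty}\E\bigl[e^{\sqrt 2\,\max\chi_n}\bigr]\le\frac{1}{\sqrt{2\pi}}\int_{A/2}^{\infty}e^{-v^2/2}\,dv\qquad\text{for every }A>0,
\]
and letting $A\to\infty$ yields $\E[e^{\sqrt 2\,\max\chi_n}]\to 0$, finishing the proof.

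\emph{Main obstacle.} The delicate part is the far tail $y\asymp\sqrt n$ and beyond. Estimating the entire upper tail by the first moment (i.e.\ taking $A=0$) only gives $\limsup_n\E[e^{\sqrt 2\,\max\chi_n}]\le\tfrac14$, which is not enough: one must use that on the scale $y\asymp\sqrt n$ the true tail $\P[G_n\ge m_n+y]$ is of order $y\,e^{-\sqrt 2 y}$, far below the first moment $\asymp n\,e^{-\sqrt 2 y}$, so that pushing the Bramson regime out to $y\le A\sqrt n$ forces the residual first–moment tail beyond $A\sqrt n$ to carry only the vanishing Gaussian mass $\int_{A/2}^{\infty}e^{-v^2/2}\,dv$. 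A secondary technical point is to isolate a citable form of Bramson's tail bound valid uniformly in $n$ over the growing window $0\le y\le A\sqrt n$; for $0\le y\le\sqrt n$ this is classical (see \cite{lalley_sellke} and Bramson's analysis of the FKPP front), and the extension to $[0,A\sqrt n]$, which is still within the diffusive deviation scale, follows by the same truncated first–moment argument.
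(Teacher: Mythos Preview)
Your proposal is correct and follows essentially the same route as the paper: reduce to $\E[e^{\sqrt 2\,M_n}]\to 0$ via the explicit void-probability formula~\eqref{eq:proof_gumbel}, then split into three ranges and combine Bramson's tail estimate on the window up to order $\sqrt n$ with the first-moment Gaussian bound beyond. The only cosmetic differences are that the paper sums over integer levels rather than using the layer-cake integral, and it cites Bramson's Proposition~3 directly (which already gives $\P[M_n\ge y]\le B\,n^{-3/2}e^{-\sqrt 2 y}$ uniformly for $-l_n\le y\le c\sqrt n$, without your extra $(1+y)$ factor), noting---as you also do---that the stated range $c=1$ in Bramson extends to any fixed $c>0$.
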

However, the second solution of $\varphi(\lambda)=0$ in the case $c=\sqrt 2$ is not gone lost completely. An easy computation shows that the \textit{signed} measure $\nu(du)=ue^{-\sqrt 2 u}du$ is a solution of the convolution equation~\eqref{eq:deny} in the case $c=\sqrt 2$.  It has been recently shown in~\cite{arguin_etal3} that the branching dynamics starting off a Poisson point process with intensity $-ue^{-\sqrt 2 u}1_{u<0}du$ converges to a non-trivial limiting point process.

\section{Proof of Theorem~\ref{theo:main}}

\subsection{Two lemmas on large deviations}
Recall that
$I(z)=\sup_{t\in\R} (zt -\varphi(t))$ is the Legendre--Fenchel transform of $\varphi$.
We need the following simple Chernoff-type estimate.
\begin{lemma}\label{lem:large_dev}
Let $\{\chi_n: n\in\N_0\}$ be a branching random walk with cluster distribution $\chi$ satisfying~\eqref{eq:def_varphi}. Then, for every $n\in\N$ and $a\geq n\varphi'(0)$,
\begin{equation}\label{eq:def_I}
\P[\max \chi_n \geq a]\leq e^{-n I(a/n)}.
\end{equation}
\end{lemma}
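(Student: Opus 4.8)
Our goal is the Chernoff bound $\P[\max \chi_n \ge a] \le e^{-nI(a/n)}$ for $a \ge n\varphi'(0)$. The plan is the standard first-moment (union bound) argument combined with the many-to-one lemma for branching random walks. First I would note that $\{\max \chi_n \ge a\} \subseteq \{\chi_n([a,\infty)) \ge 1\}$, so by Markov's inequality $\P[\max \chi_n \ge a] \le \E[\chi_n([a,\infty))]$, and more usefully, for any $t \ge 0$,
\begin{equation*}
\P[\max \chi_n \ge a] \le \E\Bigl[\sum_{v \in \chi_n} \mathbf 1_{\{V_v \ge a\}}\Bigr] \le e^{-ta}\,\E\Bigl[\sum_{v \in \chi_n} e^{t V_v}\Bigr],
\end{equation*}
where $V_v$ denotes the position of particle $v$ in generation $n$.

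The key step is to compute the additive moment $\E[\sum_{v \in \chi_n} e^{tV_v}]$. I would do this by induction on $n$, conditioning on the first generation: each particle $u$ in $\chi_1$ (a copy of $\chi$) independently spawns a branching random walk of height $n-1$, so
\begin{equation*}
\E\Bigl[\sum_{v \in \chi_n} e^{tV_v}\Bigr] = \E\Bigl[\sum_{u \in \chi_1} e^{tU_u}\Bigr]\cdot \E\Bigl[\sum_{v \in \chi_{n-1}} e^{tV_v}\Bigr] = e^{\varphi(t)}\,\E\Bigl[\sum_{v \in \chi_{n-1}} e^{tV_v}\Bigr],
\end{equation*}
using that $\E[\sum_{u\in\chi} e^{tu}] = \int_{\R} e^{tu} J(du) = e^{\varphi(t)}$ by the definition~\eqref{eq:def_varphi} of $\varphi$ and the fact that $J$ is the intensity of $\chi$. (One should take a moment to justify the interchange of expectation and the a.s.\ finite sum, e.g.\ by monotone convergence since all terms are nonnegative.) With the base case $n=0$ giving the single particle at the origin, this yields $\E[\sum_{v\in\chi_n} e^{tV_v}] = e^{n\varphi(t)}$.

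Combining the two displays gives $\P[\max \chi_n \ge a] \le e^{-ta + n\varphi(t)}$ for every $t \ge 0$. Optimizing over $t \ge 0$, the exponent is $-n\bigl(\tfrac{a}{n} t - \varphi(t)\bigr)$, and taking the supremum over $t \ge 0$ would give $e^{-nI^+(a/n)}$ where $I^+(z) = \sup_{t\ge 0}(zt - \varphi(t))$. The only remaining point — and the one place the hypothesis $a \ge n\varphi'(0)$ enters — is to check that for $z = a/n \ge \varphi'(0)$ the supremum over $t \ge 0$ coincides with the supremum over all $t \in \R$, i.e.\ $I^+(z) = I(z)$. This is because $t \mapsto zt - \varphi(t)$ is concave with derivative $z - \varphi'(t)$, which at $t=0$ equals $z - \varphi'(0) \ge 0$, so the function is nondecreasing at the origin and its unconstrained maximum is attained at some $t \ge 0$. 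This is the main (though mild) obstacle: being careful that the Legendre transform restricted to $t\ge0$ agrees with the full transform precisely in the stated range of $a$. Putting these together yields $\P[\max\chi_n \ge a] \le e^{-nI(a/n)}$, as claimed.
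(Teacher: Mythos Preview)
Your proof is correct and follows essentially the same route as the paper's: the first-moment bound $\P[\max\chi_n\ge a]\le \int_{\R}e^{t(u-a)}J^{[n]}(du)=e^{-n(\frac an t-\varphi(t))}$ (your inductive computation of $\E[\sum_{v}e^{tV_v}]$ is exactly the evaluation of $\int e^{tu}J^{[n]}(du)$, since $J^{[n]}$ is the intensity of $\chi_n$), followed by the observation that the supremum over $t\ge 0$ equals $I(a/n)$ because $a/n\ge\varphi'(0)$. The paper states this last step in one line; your concavity justification is the same argument written out.
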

\begin{proof}
Let $J^{[n]}=J*\ldots*J$ be the $n$-th convolution power of $J$. Note that $J^{[n]}$ is the intensity of $\chi_n$. We have, for every $t\geq 0$,
\begin{align*}
\P[\max \chi_n\geq a]
\leq J^{[n]}([a,\infty))
\leq \int_{\R}e^{t (u-a)}J^{[n]}(du) =e^{-n(\frac{a}{n}t- \varphi(t))}.
\end{align*}
The proof is completed by noting that $\sup_{t\geq 0}(\frac an t-\varphi(t))=I(\frac an )$, where the supremum can be taken over $t\geq 0$ since $\frac an\geq \varphi'(0)$.
\end{proof}

\begin{lemma}\label{lem:exp_moment_max}
Let $\{\chi_n: n\in\N_0\}$ be a branching random walk with a supercritical cluster distribution $\chi$ satisfying~\eqref{eq:def_varphi}. If $t\geq 0$ is such that $\varphi'(t)<  \beta_0$, then
$$
\lim_{n\to\infty} \frac 1n \log \E [e^{t \max \chi_n}]=
\beta_0t.
$$
\end{lemma}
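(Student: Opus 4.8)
The plan is to prove matching upper and lower bounds for $\frac 1n \log \E[e^{t\max\chi_n}]$. The lower bound is the easy direction: by Jensen's inequality applied to the (trivial) distribution concentrated on the event of survival, or more directly by restricting the expectation to the event $\{\max\chi_n \geq (\beta_0 - \eps)n\}$, which by~\eqref{eq:biggins} has probability bounded away from $0$ on survival (hence probability at least some $p>0$ for all large $n$, since $\chi$ is supercritical). This gives $\E[e^{t\max\chi_n}] \geq p\, e^{t(\beta_0-\eps)n}$, so $\liminf_n \frac 1n \log\E[e^{t\max\chi_n}] \geq \beta_0 t$ after letting $\eps\downarrow 0$. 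Note this lower bound does not even use the hypothesis $\varphi'(t)<\beta_0$; it holds for all $t\geq 0$.

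The upper bound is where the hypothesis $\varphi'(t) < \beta_0$ enters. I would split $\E[e^{t\max\chi_n}]$ according to the value of $\max\chi_n$, writing
\begin{equation*}
\E\bigl[e^{t\max\chi_n}\bigr] = \E\bigl[e^{t\max\chi_n}\,\mathbf 1_{\max\chi_n \leq \beta_0 n + \delta n}\bigr] + \sum_{k\geq 0} \E\bigl[e^{t\max\chi_n}\,\mathbf 1_{\max\chi_n \in (\beta_0 n + \delta n + k,\, \beta_0 n + \delta n + k + 1]}\bigr].
\end{equation*}
The first term is at most $e^{t(\beta_0+\delta)n}$. For the tail terms, I would use Lemma~\ref{lem:large_dev}: on the event that $\max\chi_n$ exceeds $\beta_0 n + \delta n + k$, the factor $e^{t\max\chi_n}$ is at most $e^{t(\beta_0 n + \delta n + k + 1)}$ and the probability is at most $e^{-n I((\beta_0 + \delta + (k+1)/n))}$ (valid since the argument exceeds $\varphi'(0)$ for $\delta$ small enough when $\varphi'(0) < \beta_0$, which holds in the supercritical case). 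The key analytic input is that $I$ grows super-linearly past $\beta_0$ in the sense that $I(\beta_0) = 0$ (since $\beta_0$ is the largest zero of $I$) and, by convexity of $I$ together with $\varphi'(t) < \beta_0$, the Legendre duality gives $I(z) - tz$ is minimized near $z = \varphi'(t) < \beta_0$, so on $z \geq \beta_0 + \delta$ we have $I(z) - tz \geq I(\beta_0+\delta) - t(\beta_0+\delta)$ with $I(\beta_0 + \delta) > 0$ strictly; more precisely $I(z) \geq tz + (I(\beta_0+\delta) - t(\beta_0+\delta))$ by convexity if $\delta$ is chosen so that the subgradient of $I$ at $\beta_0 + \delta$ is at least $t$ (possible precisely because $I'$ at $\beta_0^+$ exceeds $t$, the dual statement of $\varphi'(t) < \beta_0$). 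This makes each tail term bounded by $e^{n t(\beta_0+\delta)} \cdot e^{tk} \cdot e^{-n I(\beta_0+\delta)} e^{-tk \cdot(\text{something})}$ — more carefully, the exponential decay in $k$ from $I$ beats the growth $e^{tk}$, so the series converges and contributes at most $C\, e^{t(\beta_0 + \delta)n}\, e^{-cn}$ for constants $C,c>0$ depending on $\delta$. Hence $\E[e^{t\max\chi_n}] \leq (1 + o(1)) e^{t(\beta_0+\delta)n}$, giving $\limsup_n \frac 1n\log\E[e^{t\max\chi_n}] \leq t(\beta_0 + \delta)$, and letting $\delta\downarrow 0$ finishes.

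The main obstacle is getting the bookkeeping in the tail sum right: one must choose $\delta$ small enough that (i) $\beta_0 + \delta > \varphi'(0)$ so Lemma~\ref{lem:large_dev} applies across the whole tail, and (ii) the effective decay rate of $I(z)$ on $[\beta_0+\delta,\infty)$, after subtracting $tz$, is strictly positive — this is exactly the convex-duality fact that $\inf_{z \geq \beta_0 + \delta}(I(z) - tz) > \inf_{z\in\R}(I(z) - tz) = -\varphi(t)$ when $\varphi'(t) < \beta_0 \leq \beta_0 + \delta$, since the unconstrained infimum is attained at $z = \varphi'(t)$, strictly to the left of $\beta_0 + \delta$. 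One should also handle the case $I \equiv +\infty$ on part of the range (allowed by the remark after~\eqref{eq:biggins}), which only helps. A minor point: one must confirm $\beta_0 < \infty$, which holds because $I(z)\to\infty$ as $z\to\infty$ (a consequence of~\eqref{eq:def_varphi} giving finiteness of $\varphi$ on all of $\R$), so the largest zero $\beta_0$ is finite.
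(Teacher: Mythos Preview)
Your proposal is correct and follows the same strategy as the paper: the lower bound comes from Biggins' theorem~\eqref{eq:biggins}, and the upper bound splits the expectation around $\beta_0 n$, controls the tail via Lemma~\ref{lem:large_dev} together with the convexity of $I$, using the key duality $t<I'(\beta_0)$ (equivalent to $\varphi'(t)<\beta_0$) to make the resulting geometric series converge. The only differences are cosmetic bookkeeping---the paper splits at $\beta_0 n$ and discretizes the tail in steps of size $\eps n$ using the tangent to $I$ at $\beta_0$, whereas you split at $(\beta_0+\delta)n$ with unit steps and the tangent at $\beta_0+\delta$; note also a harmless slip in your application of Lemma~\ref{lem:large_dev}: the bound on $\P[M_n>\beta_0 n+\delta n+k]$ should read $e^{-nI(\beta_0+\delta+k/n)}$ rather than $e^{-nI(\beta_0+\delta+(k+1)/n)}$.
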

\begin{remark}
If $\varphi'(t)>\beta_0$, then the limit is equal to $\varphi(t)$; see Proposition~\ref{prop:exp_moments_max_chi_n}.
\end{remark}
\begin{proof}
Write $M_n=\max \chi_n$. By~\eqref{eq:biggins},  we have $\lim_{n\to\infty}\frac 1n M_n=\beta_0$ a.s.\ on survival. It follows in a straightforward way that
$$
\liminf_{n\to\infty}\frac 1n \log \E [e^{t M_n}]\geq \beta_0 t.
$$
We prove the converse inequality. Trivially, $\E \left[e^{tM_n}1_{M_n\leq \beta_0 n}\right]\leq e^{\beta_0 t n}$.
Fix $\eps>0$. Then, by Lemma~\ref{lem:large_dev},
$$
\E \left[e^{tM_n}1_{M_n>\beta_0 n}\right]
\leq \sum_{z\in\beta_0+\eps \N_0} e^{t(z+\eps)n}\P[M_n\geq zn]
\leq e^{\eps t n} \sum_{z\in\beta_0+\eps \N_0} e^{(tz-I(z))n}.
$$
By convexity and since $I(\beta_0)=0$ we have $I(z)\geq I'(\beta_0)(z-\beta_0)$. Note also that $t=I'(\varphi'(t))<I'(\beta_0)$ and $\beta_0<0$. It follows that
$$
\E \left[e^{tM_n}1_{M_n>\beta_0n}\right]
\leq
e^{\eps t n}e^{\beta_0I'(\beta_0) n} \sum_{z\in\beta_0+\eps \N_0} e^{(t-I'(\beta_0))zn}
\leq C e^{(\beta_0+\eps)tn}.
$$
Bringing everything together, we obtain the statement of the lemma.
\end{proof}

\subsection{Kallenberg's stability criterion}\label{subsec:kallenberg}
Our proof of the persistence part of Theorem~\ref{theo:main} is based on the backward tree construction introduced by \citet{kallenberg} in the context of critical branching dynamics on $\R^d$ starting off a homogeneous Poisson point process. This approach has been generalized in~\cite[\S~2.4]{main_book} (see also~\cite{wakolbinger_book}) to arbitrary branching dynamics on Polish spaces. 

Recall that we consider a branching dynamics on $\R^d$ with spatially homogeneous cluster distribution $\chi$ having intensity $J$. We assume that $\lambda\in\R^d$ is such that $\varphi(\lambda)=0$. Let $o$ be an individuum which is alive at time $0$ and is located at the origin in $\R^d$. The idea of Kallenberg's method is to trace back the history of the individual $o$. First, we define the set of the direct ancestors of $o$.  Let $D$ be a probability measure on $\R^d$ defined by
\begin{equation}\label{eq:def_D}
D(B)=\int_{B} e^{\langle \lambda, u\rangle} J(du), 
\end{equation}
where $B\subset\R^d$ is Borel. Since $\varphi(\lambda)=0$, $D$ is indeed a probability measure.
Let $\{\xi_{n}:n\in \N\}$, be independent $\R^d$-valued random variables with distribution $D$. Let $S_{n}=\xi_{1}+\ldots+\xi_{n}$, $S_0=0$, be the corresponding random walk. It is convenient to think of $-S_{n}$ as of the position of the direct ancestor of $o$ which is alive at time $-n$.  So,  $-S_{1}$ is the position of the father of $o$ alive at time $-1$, $-S_{2}$ is the position of the grandfather of $o$ alive at time $-2$, etc.  Now we define the set of relatives of $o$ alive at time $0$, i.e., the set of its brothers, cousins, etc. Let $\{\chi_{(z)}:z\in\R^d\}$ be the system of Palm measures of the point process $\chi$; see~\cite[\S~1.8]{main_book} and~\cite[Ch.~8,9]{matthes_etal_book}. Denote by $\{\chi_{(z)}^!:z\in\R^d\}$ the reduced Palm measures. Recall that $\chi_{(z)}^!$ is obtained from $\chi_{(z)}$ by removing the point $z$.
Conditioned on  $\xi_1,\xi_2,\ldots$ let $\kappa_n=\sum_{k=1}^{k_n} \delta_{z_{k:n}}$, $n\in\N$, be independent point processes such that $\kappa_n$ has the same law as $\chi_{(\xi_n)}^{!}$. Think of $-S_n+z_{k:n}$, $k=1,\ldots,k_n$, as of the locations of the brothers (alive at time $-(n-1)$) of the ancestor of $o$ located at $-S_{n-1}$. Let also $\{\chi_l^{(k,n)}: l\in\N_0\}$, where $n\in\N$ and $k=1,\ldots,k_n$, be independent copies of the original branching random walk $\{\chi_l: l\in\N_0\}$ generated by the cluster distribution $\chi$. Think of $\chi_l^{(k,n)}$ as of the point process describing the positions of the offsprings in the $l$-th generation of the individuum located at time $-(n-1)$ at $-S_n+z_{k:n}$. Then, we define a point process $\rho_n$ by
\begin{equation}\label{eq:def_rho_n}
\rho_n=\sum_{k=1}^{k_n} T_{-S_{n}+z_{k:n}} \left(\chi_{n-1}^{(k,n)}\right).
\end{equation}
Think of $\rho_n$ as of the point process describing the positions of the relatives of $o$ which are alive at time $0$ and have the same ancestor at time  $-n$ as $o$. Let $\rho_0=\delta_0$. The next theorem is a specification of~\cite[Proposition~2.4.3]{main_book} to our setting.
\begin{theorem}[Kallenberg's stability criterium]\label{theo:kallenberg}
The branching dynamics starting off a Poisson point process with intensity $e_{\lambda}$ is persistent if and only if the measure $\rho:=\sum_{n=0}^{\infty}\rho_n$ is locally finite with probability $1$.
\end{theorem}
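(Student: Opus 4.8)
The plan is to prove the characterization of persistence via Kallenberg's backward tree by combining the general criterion from \cite[Proposition~2.4.3]{main_book} with the specific exponential structure of our intensity. First, I would recall the abstract statement: for a branching dynamics on a Polish space starting off a Poisson point process with intensity $\nu$, persistence is equivalent to a.s.\ local finiteness of a certain ``canonical cluster'' measure built by tracing the ancestry of a typical point $o$ backward in time and, at each generation, re-growing the offspring of all collateral relatives. The content of the present theorem is that, in our translation-invariant setting with $\nu=e_\lambda$ and $\varphi(\lambda)=0$, this abstract construction specializes precisely to $\rho=\sum_{n\ge 0}\rho_n$ with $\rho_n$ as in~\eqref{eq:def_rho_n}.

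The key steps are as follows. \emph{Step 1: identify the backward ancestral chain.} Because $\chi$ is spatially homogeneous, the Palm calculus of $\pi_0$ (a Poisson process, so self-Palm) together with the refinement given by clustering shows that, conditioned on $o$ being at the origin, the position of the ancestor one generation back is distributed according to the measure obtained by size-biasing $J$ by the ratio of the intensity $e_\lambda$ at the parent to its value at $o$. Since $e_\lambda(du)=e^{-\lambda u}\,du$, this ratio is $e^{\lambda(\text{parent}-o)}$, and one computes that the displacement of $o$ relative to its parent has law $D(du)=e^{\langle\lambda,u\rangle}J(du)$; the normalization $D(\R^d)=e^{\varphi(\lambda)}=1$ is exactly the requirement $\varphi(\lambda)=0$. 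Iterating, the ancestors at times $-n$ sit at $-S_n$ with $S_n$ the random walk with step law $D$, which is the content of the definition of $\{\xi_n\}$. \emph{Step 2: identify the collateral relatives at each level.} Given the ancestral chain, the siblings of the ancestor at time $-(n-1)$ (i.e.\ the other points of the cluster spawned by the ancestor at time $-n$) are governed, again by Palm calculus for $\chi$, by the reduced Palm measure $\chi^!_{(\xi_n)}$ — this is precisely $\kappa_n$. \emph{Step 3: regrow forward.} Each such relative, located at $-S_n+z_{k:n}$ at time $-(n-1)$, independently launches a branching random walk, contributing its $(n-1)$-st generation $T_{-S_n+z_{k:n}}(\chi^{(k,n)}_{n-1})$ to the population at time $0$; summing over $k$ gives $\rho_n$, and adding the atom $\rho_0=\delta_0$ for $o$ itself gives $\rho$. \emph{Step 4: invoke the abstract theorem.} With this identification in hand, \cite[Proposition~2.4.3]{main_book} says directly that $\pi_n$ converges (in the sense of the relevant vague/weak topology on point processes) to a nonempty limit with intensity $e_\lambda$ iff $\rho$ is a.s.\ locally finite, and to the empty process otherwise.

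I expect the main obstacle to be \emph{Step 1--2}: carefully justifying that the abstract Palm-kernel construction of \cite[\S~2.4]{main_book} really collapses to the explicit random walk $S_n$ with step law $D$ and the explicit reduced Palm processes $\chi^!_{(\xi_n)}$. This requires spelling out the Palm theory for the Poisson initial condition and its interaction with the clustering map (why the ancestral step law picks up the factor $e^{\langle\lambda,u\rangle}$, and why the collateral offspring at each level are reduced-Palm versions of $\chi$ rather than size-biased copies of something else), and checking that the independence assertions built into the definitions of $\rho_n$ (independence across $n$ given $\xi_1,\xi_2,\ldots$, and independence of the forward-regrown branching random walks) match what the abstract construction provides. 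The remaining steps are essentially bookkeeping: once the dictionary between the abstract objects and our explicit $\rho_n$ is established, the theorem follows by direct citation. I would also remark that the hypothesis $\varphi(\lambda)=0$ enters in exactly one place — making $D$ a probability measure — so that the backward chain is an honest random walk; this is what makes the criterion clean.
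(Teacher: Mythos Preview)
Your proposal is correct and matches the paper's approach: the paper does not give a self-contained proof but simply states the theorem as a specialization of \cite[Proposition~2.4.3]{main_book} to the present translation-invariant setting with intensity $e_\lambda$. Your outline of how the abstract Palm-kernel construction collapses to the explicit random walk with step law $D$ and the reduced Palm processes $\chi^!_{(\xi_n)}$ is exactly the ``specification'' the paper alludes to but does not spell out, so you are filling in details the paper leaves to the reader.
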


\subsection{Proof of Part~\ref{p:main_1} of Theorem~\ref{theo:main}}\label{sec:proof_main_p1}
First we introduce some notation which is needed to handle the case $d\geq 2$ and may be ignored in the case $d=1$. Without loss of generality we may assume that $\lambda=(l,0,\ldots,0)$ for some $l>0$, otherwise, we can rotate the coordinates.  We denote by $P:\R^d\to\R$ the projection operator $P(l_1,\ldots,l_d)=l_1$.
We may project the cluster distribution $\chi$ to $\R$ via $P$. The resulting cluster distribution $\chi_P$ has intensity  $J_P(B)=J(P^{-1}B)$, where $B\subset \R$ is Borel, and the log-Laplace transform of $J_P$ is given by $\varphi_P(t)=\varphi(t,0,\ldots,0)$, $t\in\R$. Let $I_P(z)=\sup_{t\in\R}(zt-\varphi_P(t))$ be the Legendre--Fenchel transform of $\varphi_P$. We have $\varphi_P(l)=0$. Also, the assumption $\langle \lambda, \nabla\varphi(\lambda)\rangle>0$ implies that $\varphi_P'(l)>0$.

By Theorem~\ref{theo:kallenberg} it suffices to verify that $\rho(A)$ is finite a.s.\ for $A$ being a half-space  set of the form $[a,\infty)\times \R^{d-1}$, $a\in\R$. Denote by $\AAA_{\xi,\kappa}$ the $\sigma$-algebra generated by the random variables $\{\xi_n: n\in\N\}$ and the point processes $\{\kappa_n:n\in\N\}$.  We have, by~\eqref{eq:def_rho_n}, Lemma~\ref{lem:large_dev} and the convexity of the function $I_P$,
\begin{align*}
\lefteqn{\P[\rho_n(A)>0|\AAA_{\xi,\kappa}]}\\
&\leq \sum_{k=1}^{k_n} \P\left[\max P(\chi_{n-1}^{(k,n)}) \geq  P(S_{n})-P(z_{k:n})+a\right]\\
&\leq \sum_{k=1}^{k_n} \exp\left\{{-(n-1) I_P\left(\frac{P(S_n)-P(z_{k:n})+a}{n-1}\right)}\right\}\\
&\leq \sum_{k=1}^{k_n} \exp\left\{{-(n-1) I_P\left(\frac{P(S_n)}{n-1}\right)+ I'_P\left(\frac{P(S_n)}{n-1}\right)(P(z_{k:n})-a)}\right\}.
\end{align*}
The expectation of $\xi_1$ is given by $\E[\xi_1]=\int_{\R^d} uD(du)=\nabla\varphi(\lambda)$. By the law of large numbers,
$$
\frac{P(S_{n})}{n}\to P(\nabla\varphi(\lambda))=\varphi'_P(l)>0 \;\;\;\text{ a.s.}
$$
By the properties of the Legendre--Fenchel transform, $I_P(\varphi'_P(l))=l \varphi'_P(l)$ and $I'_P(\varphi'_P(l))=l$.
Take $\eps>0$. It follows that with probability $1$ we have for sufficiently large $n$,
$$
I_P\left(\frac{P(S_n)}{n-1}\right)>l \varphi'_P(l)-\eps,\;\;\;
l-\eps<I_P'\left(\frac{P(S_n)}{n-1}\right)<l+\eps.
$$
Thus, we have
\begin{equation}\label{eq:proof_kall_rho_est}
\P[\rho_n(A)>0|\AAA_{\xi,\kappa}]
\leq
C e^{-(l \varphi'_P(l)-\eps)n} Y_n,
\end{equation}
where $\{Y_n: n\in\N\}$ are random variables defined by
$$
Y_n= \sum_{k=1}^{k_n}(e^{(l+\eps) P(z_{k:n})}+e^{(l-\eps) P(z_{k:n})})
=\int_{\R^d}(e^{\langle \lambda_{+\eps},u\rangle}+e^{\langle \lambda_{-\eps},u\rangle})\kappa_n(du)
$$
and $\lambda_{\pm\eps}=(l\pm\eps,0,\ldots,0)$. We will show that $\E [Y_n]$ is finite.
Define a function $f:\MMM\to [0,+\infty]$ (where $\MMM$ is  the space of all locally finite counting measures on $\R^d$) by $f(\beta)=\int_{\R^d} (e^{\langle\lambda_{+\eps}, u\rangle}+e^{\langle\lambda_{-\eps}, u\rangle})\beta(du)$. Then,
\begin{align*}
\E [Y_n]
=\int_{\R^d} \E [f(\chi_{(u)}^{!})] J(du)
\leq \int_{\R^d} \E[ f(\chi_{(u)})] J(du)
=\E [f(\chi)]\cdot \E[\chi (\R^d)],
\end{align*}
where the last step follows from the definition of the Palm measure. It follows that
\begin{equation}\label{eq:exp_Y_n}
\E [Y_n]= (e^{\varphi(\lambda_{+\eps})}+e^{\varphi(\lambda_{-\eps})})e^{\varphi(0)}.
\end{equation}
Recall that in Part~\ref{p:main_1} of Theorem~\ref{theo:main} we assume that $l \varphi'_P(l)>0$. It follows from~\eqref{eq:proof_kall_rho_est} and~\eqref{eq:exp_Y_n} that
$$
\sum_{n=1}^{\infty} \P[\rho_n(A)>0]
=
\sum_{n=1}^{\infty} \E [\P[\rho_n(A)>0|\AAA_{\xi,\kappa}]]<\infty.
$$
By the Borel--Cantelli lemma, $\rho(A)=\sum_{n=0}^{\infty}\rho_n(A)$ is finite a.s.  By Theorem~\ref{theo:kallenberg} the proof is completed. 

\subsection{Proof of Part~\ref{p:main_2} of Theorem~\ref{theo:main}}
\begin{proof}[Proof of Proposition~\ref{prop:unstab_speed}]
Recall the definition of $\pi_n$ in~\eqref{eq:def_pi_n}. The point process  $\pi_n^*:=\sum_{i=1}^{\infty}\delta_{U_{i}+\max \chi_n^{(i)}}$ is a Poisson point process on $\R$ with intensity $\E [e^{\lambda \max\chi_n}]\cdot  e^{-\lambda u}du$.
For every $\beta>\beta_0$ we have
\begin{align}\label{eq:proof_prop_speed_unstab}
\sum_{n=1}^{\infty} \P[\max \pi_n>\beta n]
\leq \sum_{n=1}^{\infty} \E [\pi_n^*((\beta n,\infty))]
= \frac 1 {\lambda} \sum_{n=1}^{\infty} (e^{-\lambda \beta n} \E[ e^{\lambda \max \chi_n}])
<\infty,
\end{align}
where the last step follows from Lemma~\ref{lem:exp_moment_max} (note that $\varphi'(\lambda)<\beta_0$). By the Borel--Cantelli lemma, we have
$
\limsup_{n\to\infty} \frac 1n\max\pi_n\leq \beta_0.
$
The converse inequality follows immediately from~\eqref{eq:biggins}.
\end{proof}

\begin{proof}[Proof of Part~\ref{p:main_2} of Theorem~\ref{theo:main}]
 Let first $d=1$. For concreteness, assume that $\lambda>0$. Then, $\varphi'(0)<0$,  $\beta_0<0$ and Proposition~\ref{prop:unstab_speed} implies that $\lim_{n\to\infty} \max \pi_n=-\infty$ a.s. It follows that $\pi_n$ converges weakly to the empty process.

We consider the case $d\geq 2$. Without restriction of generality let $\lambda=(l,0,\ldots,0)$ with $l>0$.
Let $A\subset \R^{d}$ be a bounded Borel set. We will show that with probability $1$,  $\pi_n(A)=0$ for sufficiently large $n$.
Let $E_n=\R\times [-n^2, n^2]^{d-1}$ and $\bar E_n=\R^d\backslash E_n$. Let $N_n$ (respectively, $\bar N_n$) be a random variable counting the number of particles of the point process $\pi_n$ which are in the set $A$ and whose ancestor at time $0$ was in the set $E_n$ (respectively, in $\bar E_n$). Clearly, $\pi_n(A)=N_n+\bar N_n$. After elementary transformations we obtain
\begin{align*}
\E [\bar N_n]
=
\int_{\bar E_n} e^{-\langle\lambda, u\rangle} J^{[n]}(A-u)du
=\int_{A} e^{-\langle\lambda, a\rangle}D^{[n]}(a-\bar E_n)da,
\end{align*}
where $D^{[n]}=D*\ldots*D$ is the $n$-th convolution power of the probability measure $D$  defined in~\eqref{eq:def_D}.
By the Cram\'er--Chernoff bound there is $c>0$ such that $D^{[n]} (a-\bar E_n)=O(e^{-cn})$ uniformly in $a\in A$. Hence, $\P[\bar N_n>0]\leq \E [\bar N_n]= O(e^{-cn})$. By the Borel--Cantelli lemma, $\bar N_n=0$ for all but finitely many $n$ a.s.

Let us show that with probability $1$, $N_n=0$ for all but finitely many $n$.
Consider the progeny of all particles which start at time $0$ in the cylinder set $\R\times [0,1]^{d-1}$. Let us project these particles onto the first coordinate $\R$ via the projection operator $P$; see Section~\ref{sec:proof_main_p1}.  We obtain a branching dynamics $\{\pi_n^{(P)}: n\in\N_0\}$ on $\R$ starting off a Poisson point process with intensity $e^{-lx}$. By Proposition~\ref{prop:unstab_speed} (see also~\eqref{eq:proof_prop_speed_unstab}) there are $\beta<0$, $c>0$ such that $\P[\max \pi_n^{(P)} >\beta n]=O(e^{-cn})$. Divide the set $E_n$ into $(2n^2)^{d-1}$ sets which are translates of the set $\R\times [0,1]^{d-1}$ and apply to each of them the above considerations. Let $R_n$ be the largest among the first coordinates of the offsprings in the $n$-th generation of the particles which started in $E_n$. It follows that $\P[R_n>\beta n]=O(n^{2d-2}e^{-cn})=O(e^{-cn/2})$. By the Borel--Cantelli lemma, $R_n\to -\infty$ a.s. It follows that with probability $1$, $N_n=0$ for all but finitely many $n$.  This completes the proof.
\end{proof}

\subsection{Proof of Theorem~\ref{theo:boundary_unstable}}
Let $\chi_n$ be the point process formed by the particles at time $n$ in a single branching Brownian motion with drift $-\sqrt 2$. Write $M_n=\max\chi_n$. It suffices to show that
\begin{equation}\label{eq:exp_moment_boundary}
\lim_{n\to\infty}\E [e^{\sqrt 2 M_n}]=0.
\end{equation}
Fix $\eps>0$. Note that the expected number of particles at time $n$  in the branching Brownian motion is $e^n$. Denoting by $\bar\Phi$ the tail of the standard normal distribution function we have a trivial estimate
$$
\P\left[M_n\geq y\right] \leq \E [\chi_n([y,\infty))]=e^n \bar \Phi\left(\frac{y+\sqrt 2 n}{\sqrt n}\right)\leq \frac C {\sqrt n  e^{\sqrt 2 y}} e^{-\frac{y^2}{2n}}.
$$
It follows that for a sufficiently large $c>0$,
\begin{align*}
\E \left[e^{\sqrt 2 M_n}1_{M_n\geq [c\sqrt n]}\right]
&\leq C\sum_{k=[c\sqrt n]}^{\infty} e^{\sqrt 2 k} \P\left[M_n\geq k\right]\\
&\leq \frac C{\sqrt n}\sum_{k=[c\sqrt n]}^{\infty}  e^{-\frac{k^2}{2n}}\\
&\leq C\int_{c-\frac 2 {\sqrt n}}^{\infty}e^{-\frac{s^2}{2}}ds\\
&\leq \eps.
\end{align*}
Write $l_n=\frac 3{2\sqrt 2}\log n$. By Proposition~3 of~\citet{bramson} there is $B=B(c)$ such that for all $-l_n\leq y \leq c\sqrt n$, we have the following estimate
$$
\P\left[M_n\geq y\right]\leq  \frac {B}{n^{3/2}e^{\sqrt 2 y}}.
$$
This is stated in~\cite{bramson} for $c=1$ only, but the proof is valid for every $c>0$.
It follows that
$$
\E \left[e^{\sqrt 2 M_n}1_{-[l_n]\leq M_n\leq [c\sqrt n] }\right]
\leq \sum_{k=-[l_n]}^{[c\sqrt n]} e^{\sqrt 2(k+1)}\P\left[M_n\geq k\right]
=O\left(\frac 1n\right).
$$
Trivially, we have
$
\lim_{n\to\infty}\E \left[e^{\sqrt 2M_n}1_{M_n\leq -l_n}\right]=0.
$
Bringing everything together, we obtain~\eqref{eq:exp_moment_boundary}.

\section{Proof of Propositions~\ref{prop:leader_stable}, \ref{prop:exp_moments_max_chi_n}, \ref{prop:max_equilibrium_gumbel}, \ref{prop:family_drift}}

\subsection{Proof of Propositions~\ref{prop:exp_moments_max_chi_n} and~\ref{prop:max_equilibrium_gumbel}}
Assume first that $\varphi(t)=0$ and $\varphi'(t)>\beta_0$, which, in fact, implies that $t\varphi'(t)>0$. Then, the branching dynamics $\{\pi_n: n\in\N_0\}$ starting off a Poisson point process with intensity $e_{t}$ is persistent by Theorem~\ref{theo:main}. In particular, it follows that
$\lim_{n\to\infty} \P[\pi_n((z,+\infty))=0]$ exists in $(0,1)$ for at least one $z\in \R$. On the other hand,
\begin{equation}\label{eq:proof_gumbel}
\P [\max \pi_n\leq z]=\P[\pi_n((z,+\infty))=0]=\exp\left\{-\frac 1t \E [e^{t \max \chi_n}]\cdot e^{-tz}\right\}.
\end{equation}
Hence, $c(t)=\lim_{n\to\infty}\E [e^{t \max \chi_n}]$ exists in $(0,\infty)$.

If $\varphi(t)\neq 0$ and $\varphi'(t)>\beta_0$, we consider another branching random walk $\{\tilde \chi_n: n\in\N_0\}$ obtained by adding to $\{\chi_n: n\in\N_0\}$ a drift $-\varphi(t)n/t$, that is $\tilde \chi_n= T_{-\varphi(t)n/t}(\chi_n)$. The log-Laplace transform $\tilde \varphi$ of $\tilde \chi_1$ satisfies $\tilde \varphi(t)=0$ and $\tilde \varphi'(t)>\beta_0-\frac {\varphi(t)}{t}=\tilde \beta_0$, where $\tilde \beta_0$ is the analogue of $\beta_0$ for the function $\tilde I(z)=\sup_{s\in\R}(sz-\tilde \varphi(s))$. By the above,
$$
\lim_{n\to\infty} e^{-\varphi(t)n} \E [e^{t \max \chi_n}]
=\lim_{n\to\infty} \E [e^{t  \max\tilde \chi_n}]\in (0,\infty).
$$
This completes the proof of the first part of Proposition~\ref{prop:exp_moments_max_chi_n}. The second part follows from Lemma~\ref{lem:exp_moment_max}. Proposition~\ref{prop:max_equilibrium_gumbel} follows by taking the limit $n\to\infty$ in~\eqref{eq:proof_gumbel}.

\subsection{Proof of Proposition~\ref{prop:leader_stable}}
The idea of the proof is as follows. At time $0$, the density of particles at $-un$ is $e^{\lambda un}$, and the probability that a particle at $-un$ will produce at time $n$ an offspring, say, on the positive half-axis is approximately $e^{-nI(u)}$; cf.\ Lemma~\ref{lem:large_dev}. Thus, the ancestor at time $0$ of the particle which is the leader at time $n$ is most likely to be found at position approximately $-un$, where $u$ is such that $\lambda u- I(u)$ is maximal. The maximizer is given by $u=\varphi'(\lambda)$, whence the statement.

Let us provide a rigorous argument justifying this. Fix some $\eps>0$ and write $\alpha_{\pm}(\eps)=\varphi'(\lambda)\pm\eps$.  We estimate the probability $P_n^+(\eps)$ that some offspring of some particle which started at time $0$ below the level  $-\alpha_+(\eps)n$ will be above the level $-\sqrt n$ at time $n$:
\begin{align*}
P_n^+(\eps)&:=
\P\left[\max_{\genfrac{}{}{0pt}{1} {i\in\N} {U_i<-\alpha_+(\eps)n}  }(U_i+ \max  \chi_n^{(i)})>-\sqrt n\right]\\
&\leq
\int_{\alpha_+(\eps)n}^{\infty} e^{\lambda u}\P[\max \chi_n\geq u-\sqrt n]du\\
&=
n e^{\lambda \sqrt n} \int_{\alpha_+(\eps)-\frac 1 {\sqrt n}}^{\infty} e^{\lambda n v}\P[\max\chi_n\geq nv]dv\\
&\leq
n e^{\lambda \sqrt n} \int_{\alpha_+(\eps/2)}^{\infty} e^{(\lambda  v-I(v))n} dv,
\end{align*}
where we have used a substitution $u=nv+\sqrt n$ and Lemma~\ref{lem:large_dev}. The last inequality holds if $n$ is sufficiently large.
The function $g(v):=\lambda v-I(v)$ is concave and we have $g(\varphi'(\lambda))=0$, $g'(\varphi'(\lambda))=0$.  The global maximum of $g$ is attained at $v=\varphi'(\lambda)$.
By concavity there are $c_1>0$, $c_2>0$ such that
$g(v)\leq -c_1-c_2(v-\alpha_+(\eps/2))$ for all $v\geq \alpha_+(\eps/2)$. Hence,
$$
P_n^+(\eps)
\leq
n e^{\lambda \sqrt n} e^{-c_1n} \int_{\alpha_+(\eps/2)}^{\infty} e^{-c_2  (v-\alpha_+(\eps/2))n} dv=O(e^{-\frac 12 c_1 n}).
$$
Analogous considerations apply to $P_n^-(\eps)$, the probability that some offspring of some particle which started at time $0$ above the level $-\alpha_-(\eps)n$ will be above the level $-\sqrt n$ at time $n$. Hence,
$
\sum_{n=1}^{\infty} P_n^+(\eps)<\infty
$
and
$
\sum_{n=1}^{\infty} P_n^-(\eps)<\infty
$.
By the Borel--Cantelli lemma, only finitely often a particle starting not in the interval $(-\alpha_+(\eps)n,-\alpha_-(\eps)n)$ will be able to produce a particle above the level $-\sqrt n$ at time $n$.
Also,  we have
$$
\sum_{n=1}^{\infty} \P[\max \pi_n<-\sqrt n]=\sum_{n=1}^{\infty} \exp\left\{- \frac 1{\lambda}\E [e^{\lambda \max\chi_n}] e^{\lambda \sqrt n}\right\}<\infty,
$$
where the last inequality holds since $\E[e^{\lambda \max\chi_n}]$ remains bounded  by Proposition~\ref{prop:exp_moments_max_chi_n}.
Hence, only finitely often the leading particle will be below the level $-\sqrt n$. The statement of the proposition follows.


\subsection{Proof of Propositions~\ref{prop:family_finite} and~\ref{prop:family_drift}}
We will use the notation introduced in Section~\ref{subsec:kallenberg}. If the cluster distribution is critical, we always assume that $\E[\chi(\R)^2]$ is finite.
\begin{lemma}\label{lem:rho_finite}
If the cluster distribution $\chi$ is  subcritical or $\chi(\R)=1$ a.s., then $\rho(\R)<\infty$ a.s. Otherwise, $\rho(\R)=\infty$ a.s.
\end{lemma}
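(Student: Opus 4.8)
The plan is to reduce the statement to a moment analysis of the integer-valued variables $\rho_n(\R)=\sum_{k=1}^{k_n}\chi_{n-1}^{(k,n)}(\R)$ (the shifts $T_{-S_n+z_{k:n}}$ do not affect total mass). Since $\rho(\R)=1+\sum_{n\ge1}\rho_n(\R)$ and each $\rho_n(\R)$ is a finite sum of a.s.\ finite total populations of branching random walks, $\rho(\R)<\infty$ a.s.\ if and only if $\rho_n(\R)=0$ for all but finitely many $n$ a.s. Two observations make this tractable: first, the pairs $(\xi_n,\kappa_n)$ are i.i.d., hence so are the sibling counts $k_n=\kappa_n(\R)$, and the variables $\rho_n(\R)$, $n\in\N$, are mutually independent, since $\rho_n(\R)$ depends only on $\kappa_n$ and on an independent family of branching random walks; second, the relevant parameters are the cluster mean $m=e^{\varphi(0)}=\E[\chi(\R)]$, which dominates the survival probability $q_l:=\P[\chi_l(\R)\ge1]\le\E[\chi_l(\R)]=m^l$ of a single branching random walk (a Galton--Watson process with offspring law $\chi(\R)$), and the mean sibling count $\E[k_1]=\int_\R e^{\lambda u}\,\E[\chi_{(u)}^!(\R)]\,J(du)=\E\big[(\chi(\R)-1)\sum_{u\in\chi}e^{\lambda u}\big]$, the last equality by the reduced Campbell formula for the Palm measures of $\chi$.

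For the implication ``$\rho(\R)<\infty$ a.s.'': if $\chi(\R)=1$ a.s.\ then $\chi_{(u)}^!$ is the empty configuration for $J$-a.e.\ $u$, so $k_n=0$ for all $n$, $\rho=\delta_0$, and there is nothing to prove. If $\chi$ is subcritical then $m<1$, and conditioning on $\AAA_{\xi,\kappa}$ and bounding the probability of a union, $\P[\rho_n(\R)\ge1\mid\AAA_{\xi,\kappa}]\le k_n q_{n-1}\le k_n m^{n-1}$, whence $\P[\rho_n(\R)\ge1]\le m^{n-1}\E[k_n]=m^{n-1}\E[k_1]$. Since $\E[k_1]<\infty$ we get $\sum_{n\ge1}\P[\rho_n(\R)\ge1]\le\E[k_1]/(1-m)<\infty$, and the Borel--Cantelli lemma gives $\rho_n(\R)=0$ eventually, i.e.\ $\rho(\R)<\infty$ a.s.

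For the implication ``$\rho(\R)=\infty$ a.s.'' (the cluster is critical with $\P[\chi(\R)=1]<1$, or supercritical): as the events $\{\rho_n(\R)\ge1\}$ are independent, by the second Borel--Cantelli lemma it is enough to show $\sum_{n\ge1}\P[\rho_n(\R)\ge1]=\infty$. Using that $\rho_n(\R)\ge\chi_{n-1}^{(1,n)}(\R)$ on the event $\{k_n\ge1\}$, and independence of $k_n$ and $\chi_{n-1}^{(1,n)}$, one has $\P[\rho_n(\R)\ge1]\ge\P[k_n\ge1]\,\P[\chi_{n-1}(\R)\ge1]=\P[k_1\ge1]\,q_{n-1}$. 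Here $\P[k_1\ge1]>0$, because $m\ge1$ together with $\P[\chi(\R)=1]<1$ forces $\P[\chi(\R)\ge2]>0$, so $\E[k_1]=\E[(\chi(\R)-1)\sum_{u\in\chi}e^{\lambda u}]>0$. In the supercritical case $q_l\to q_\infty>0$, hence $\sum_n q_{n-1}=\infty$; in the critical case, with $0<\sigma^2:=\Var\chi(\R)<\infty$ (the latter being the standing assumption), Kolmogorov's estimate gives $q_l\sim 2/(l\sigma^2)$, hence again $\sum_n q_{n-1}=\infty$. In either case $\rho_n(\R)\ge1$ for infinitely many $n$ a.s., so $\rho(\R)=\infty$ a.s.

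The main obstacle I anticipate is the bookkeeping around the sibling count $k_n$: it is not the plain cluster size $\chi(\R)$ but its $\sum_{u\in\chi}e^{\lambda u}$-size-biased version, so its mean and positivity must be read off from $\chi$ through the reduced Palm/Campbell calculus, and this is the only place where integrability of $\chi$ against the weight $e_\lambda$ is exploited; everything else --- the geometric bound $q_l\le m^l$ (subcritical), $q_l\to q_\infty>0$ (supercritical), Kolmogorov's asymptotics (critical), and the independence of the $\rho_n(\R)$ --- is routine.
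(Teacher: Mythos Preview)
Your argument is correct and follows essentially the same route as the paper. In the subcritical case the paper bounds $\E[\rho(\R)]$ directly via $\E[\rho_n(\R)]=\E[k_1]\,m^{n-1}$ rather than bounding $\P[\rho_n(\R)\ge 1]$ and invoking the first Borel--Cantelli lemma, but this is the same moment estimate; in the non-subcritical case your lower bound $\P[\rho_n(\R)\ge 1]\ge \P[k_1\ge 1]\,q_{n-1}$, the use of Kolmogorov's estimate in the critical case, and the appeal to the second Borel--Cantelli lemma coincide with the paper's proof, with you being more explicit about the independence of the $\rho_n(\R)$ and about why $\P[k_1\ge 1]>0$.
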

\begin{proof}
In the subcritical case we have $\E[\chi(\R)]<1$ and hence,
$$
\E[\rho(\R)]=\sum_{n=0}^{\infty} \E [\rho_n(\R)]=1+\sum_{n=1}^{\infty} \E [k_n] \E[\chi_{n-1}(\R)]= 1+C \sum_{n=1}^{\infty} (\E[\chi(\R)])^{n-1}<\infty.
$$
It follows that $\rho(\R)<\infty$ a.s.
In the non-subcritical case we write
$$
\P[\rho_n(\R)>0]\geq \P[k_n\geq 1, \chi_{n-1}^{(1,n)}(\R)>0]=\P[k_n\geq 1]\cdot \P[\chi_{n-1}(\R)>0].
$$
If we assume that $\P[\chi(\R)=1]<1$, then $\P[k_n\geq 1]>0$ is a constant independent of $n$. Further, in the critical case we can find $c>0$ such that for all $n\in\N$, $\P[\chi_{n-1}(\R)>0]>c/n$; see~\cite[p.~19]{athreya_book}. (Here the finiteness of $\E[\chi(\R)^2]$ is needed). In the supercritical case, we even have $\P[\chi_{n-1}(\R)>0]>c$.
By the Borel--Cantelli lemma,  $\rho(\R)=\infty$ a.s.
\end{proof}

\begin{lemma}\label{lem:equiv_rho_gamma}
Let $A\subset \MMM$ be a Borel set such that if $\beta\in A$, then $T_u\beta\in A$ for every $u\in\R$.  Let $\varphi(\lambda)=0$, $\lambda>0$, $\varphi'(\lambda)>0$. Then, $\P[\Gamma\in A]=0$ if and only if $\P[\rho\in A]=0$
\end{lemma}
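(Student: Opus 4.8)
The plan is to express the law of the backward cluster $\rho$ through the family structure of the equilibrium state, and then to recognise $\P[\rho\in A]$ as a size‑biased version of $\P[\Gamma\in A]$ whenever $A$ is shift‑invariant.

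First I would recall why $\rho$ is well defined in the present regime: since $\varphi(\lambda)=0$ and $\lambda\varphi'(\lambda)>0$, the branching dynamics started off a Poisson process with intensity $e_{\lambda}$ is persistent (Theorem~\ref{theo:main}), so by Theorem~\ref{theo:kallenberg} the measure $\rho=\sum_{n\ge 0}\rho_n$ is a.s.\ locally finite. By the general theory behind Kallenberg's construction (\cite[\S~2.4]{main_book}), $\rho$ has the law of the cluster containing a typical point of $\Pi_{e_{\lambda}}^{\chi}$, i.e.\ the Palm distribution at the origin of the canonical (Kerstan--Liemant--Matthes) measure $\mathcal K$ of the infinitely divisible point process $\Pi_{e_{\lambda}}^{\chi}$. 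On the other hand, Proposition~\ref{prop:maillard} represents $\Pi_{e_{\lambda}}^{\chi}$ as a Poisson cluster process whose cluster centres form a Poisson process with intensity $c e_{\lambda}$ and whose clusters are independent copies of $\Gamma$; the canonical measure of such a process is
\begin{equation*}
\mathcal K(\cdot)=c\int_{\R} e^{-\lambda w}\,\P[T_{w}\Gamma\in\cdot]\,dw .
\end{equation*}

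Next I would run the Palm computation for this representation. Disintegrating $\mathcal K$ at its atoms and moving the selected atom to the origin (the change of variables $u=w+g$ produces the weight $e^{\lambda g}$) yields, for every Borel $A\subset\MMM$,
\begin{equation*}
\P[\rho\in A]=c\,\E\Big[\int_{\R} e^{\lambda g}\,1_{T_{-g}\Gamma\in A}\;\Gamma(dg)\Big].
\end{equation*}
Taking $A=\MMM$ shows $c\,\E\big[\int_{\R}e^{\lambda g}\Gamma(dg)\big]=1$, consistent with $\Pi_{e_{\lambda}}^{\chi}$ having intensity $e_{\lambda}$. Now I use the hypothesis on $A$: if $\beta\in A$ forces $T_{u}\beta\in A$ for all $u$, then $\beta\in A\iff T_{u}\beta\in A$, so $1_{T_{-g}\Gamma\in A}=1_{\Gamma\in A}$ for every $g$, and the identity collapses to
\begin{equation*}
\P[\rho\in A]=c\,\E\big[1_{\Gamma\in A}\,Z\big],\qquad Z:=\int_{\R} e^{\lambda g}\,\Gamma(dg).
\end{equation*}
Since $\lambda>0$ and $\max\Gamma=0$ a.s.\ (Proposition~\ref{prop:maillard}), the atom of $\Gamma$ at $0$ already contributes $1$, so $Z\ge 1>0$ a.s.; also $\E[Z]=1/c<\infty$. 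Hence $\P[\rho\in A]=0$ iff $1_{\Gamma\in A}Z=0$ a.s.\ iff $1_{\Gamma\in A}=0$ a.s.\ iff $\P[\Gamma\in A]=0$, which is exactly the asserted equivalence.

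The step that needs the most care is the first one: identifying $\mathrm{law}(\rho)$ with the Palm distribution at the origin of the canonical measure of $\Pi_{e_{\lambda}}^{\chi}$ and writing that canonical measure explicitly from the cluster representation of Proposition~\ref{prop:maillard}. No normalisation constant intervenes in this Palm distribution because $\Pi_{e_{\lambda}}^{\chi}$ has intensity density $1$ at the origin; the subsequent Palm algebra and the strict positivity of $Z$ (which is where $\lambda>0$ and $\max\Gamma=0$ enter) are then routine.
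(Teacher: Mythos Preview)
Your proof is correct and follows essentially the same route as the paper's: both use the canonical (L\'evy) measure $\Psi$ of the infinitely divisible process $\Pi_{e_\lambda}^{\chi}$, write it via the cluster representation of Proposition~\ref{prop:maillard}, and invoke Theorem~2.4.4 of \cite{main_book} to identify the law of $\rho$ with the Palm distribution of $\Psi$. The only cosmetic difference is that the paper argues abstractly through the intermediate equivalence $\Psi(A)=0$, whereas you carry out the Palm disintegration explicitly and obtain the size-biased formula $\P[\rho\in A]=c\,\E[1_{\Gamma\in A}Z]$; this makes the role of the strictly positive weight $Z$ more visible but is the same argument underneath.
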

\begin{proof}
As already observed in Section~\ref{subsec:equilibrium_and_family}, the point process $\Pi_{e_{\lambda}}^{\chi}$ is infinitely divisible. Denote by $\Psi$ its canonical (L\'evy) measure; see~\cite[Ch.~2]{matthes_etal_book}. Note that $\Psi$ is a generally infinite measure on $\MMM$.
On the one hand, the cluster representation of $\Pi_{e_{\lambda}}^{\chi}$ given in Proposition~\ref{prop:maillard} implies that we have
$$
\Psi(A)=c\int_{\R} \P[T_u\Gamma\in A] e^{-\lambda u}du.
$$
Thus, $\P[\Gamma\in A]=0$ if and only if $\Psi(A)=0$.
On the other hand, by Theorem~2.4.4 of~\cite{main_book} we have
$$
\Psi_{(u)}(A)=\P[T_u\rho\in A], \;\;\, u\in\R,
$$
where $\{\Psi_{(u)}: u\in\R\}$ is the family of Palm measures of $\Psi$. Thus, $\P[\rho\in A]=0$ if and only if $\Psi_{(u)}(A)=0$ for all $u\in\R$. Finally, by the definition of the Palm measure we have for every Borel set $B\subset \R$,
$$
\int_{A}\beta(B)\Psi(d\beta)=\int_B \Psi_{(u)}(A)e_{\lambda}(du).
$$
Thus, $\Psi_{(u)}(A)=0$ for all $u\in\R$ if and only if $\Psi(A)=0$.
\end{proof}
\begin{proof}[Proof of Proposition~\ref{prop:family_finite}]
The proof follows from Lemma~\ref{lem:rho_finite} and Lemma~\ref{lem:equiv_rho_gamma} by taking $A$ to be the set of all point configurations $\beta\in\MMM$ such that $\beta(\R)=\infty$ (in the subcritical case)  or  $\beta(\R)<\infty$ (otherwise).
\end{proof}

\begin{lemma}\label{lem:palm_exp}
Let $\varphi(\lambda)=0$, $\lambda>0$, $\varphi'(\lambda)>0$. If  $\chi$ is critical or supercritical, then $\int_{\R} e^{ru}\Gamma(du)<\infty$ for every $r>r_0$, where $r_0\geq 0$ is the smallest solution of the equation $\varphi(r)=\varphi'(\lambda)r$.
\end{lemma}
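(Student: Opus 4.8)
The plan is to bound $\E\left[\int_{\R} e^{ru}\Gamma(du)\right]$ using the cluster representation from Proposition~\ref{prop:maillard}, transfer the computation to the backward-tree process $\rho=\sum_{n=0}^{\infty}\rho_n$ via the Palm-measure identities established in the proof of Lemma~\ref{lem:equiv_rho_gamma}, and then estimate the resulting series using the large-deviation Lemma~\ref{lem:large_dev}. More precisely: since $\Pi_{e_\lambda}^{\chi}\eqdistr\sum_i T_{W_i}\Gamma_i$ with $\sum_i\delta_{W_i}$ a Poisson process of intensity $ce_{\lambda}$, the canonical measure $\Psi$ of the infinitely divisible process $\Pi_{e_\lambda}^{\chi}$ satisfies $\Psi(A)=c\int_{\R}\P[T_u\Gamma\in A]e^{-\lambda u}du$, and by Theorem~2.4.4 of~\cite{main_book} its Palm measures satisfy $\Psi_{(u)}(\cdot)=\P[T_u\rho\in\cdot]$. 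Testing these against the functional $\beta\mapsto\int_{\R}e^{ru}\beta(du)$ (more carefully, against a bounded increasing truncation $\beta\mapsto\int e^{ru}\wedge M\,\beta(du)$ and letting $M\to\infty$ by monotone convergence) reduces finiteness of $\int e^{ru}\Gamma(du)$ to finiteness of the corresponding integral against $\rho$, after a shift — heuristically, $\E\int e^{ru}\Gamma(du)<\infty$ should be equivalent to $\E\int e^{ru}\rho(du)<\infty$.

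The core estimate is then a bound on $\E[\rho_n(du)]$. Using the definition $\rho_n=\sum_{k=1}^{k_n}T_{-S_n+z_{k:n}}(\chi_{n-1}^{(k,n)})$, independence, and the fact (derived as in the proof of Part~\ref{p:main_1} of Theorem~\ref{theo:main}) that $\E[\sum_k e^{s z_{k:n}}]=\E[f_s(\chi_{(\xi_n)}^{!})]$ is controlled by $e^{\varphi(s+\lambda)}e^{\varphi(0)}$-type factors coming from the Palm calculus, the plan is to compute for $r>0$
\begin{align*}
\E\left[\int_{\R}e^{ru}\rho_n(du)\right]
= \E\left[\sum_{k=1}^{k_n} e^{r(-S_n+z_{k:n})}\int_{\R}e^{ru}\chi_{n-1}^{(k,n)}(du)\right].
\end{align*}
The walk $S_n$ has step distribution $D(du)=e^{\lambda u}J(du)$, so $\E[e^{-rS_n}]=e^{n\varphi(\lambda-r)}$; the inner branching-random-walk integral has expectation $e^{(n-1)\varphi(r)}$; and the $z_{k:n}$-factor contributes the constant $e^{\varphi(0)}$ together with a factor that, combined with the $D$-expectation, again reorganizes into powers of $e^{\varphi}$. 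Collecting exponents, one obtains a geometric-type series with ratio $e^{\varphi(r)-?\,\varphi(\lambda-r)+\cdots}$; convergence is governed by a condition of the form $\varphi(r)<\varphi'(\lambda)r$, i.e. $r>r_0$. (The convexity of $\varphi$ and the equalities $\varphi(\lambda)=0$, $\varphi'(\lambda)>0$ guarantee that $\varphi(r)-\varphi'(\lambda)r$ is negative exactly for $r$ in an interval with left endpoint the stated $r_0\geq 0$; criticality/supercriticality, i.e. $\varphi(0)\geq 0$, ensures $0$ lies on the correct side so that $r_0$ is indeed the relevant root.) Where the naive first-moment bound is too lossy — because $S_n$ drifts to $+\infty$ at speed $\varphi'(\lambda)$ while $\chi_{n-1}$ has exponential spread — one should tilt, replacing the crude $\E[e^{ru}\chi_{n-1}(du)]$ by the Chernoff bound of Lemma~\ref{lem:large_dev} applied on the event that the relevant offspring sits near the deterministic LLN trajectory of $-S_n$, exactly in the spirit of~\eqref{eq:proof_kall_rho_est}.

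The main obstacle I anticipate is the bookkeeping of the three interacting exponential scales — the backward random walk $-S_n$ with its tilted step law $D$, the reduced Palm cluster $\kappa_n=\chi_{(\xi_n)}^{!}$, and the forward branching random walk $\chi_{n-1}$ — and verifying that their combined exponential rate is precisely $\varphi(r)-\varphi'(\lambda)r$ so that the threshold comes out to be exactly $r_0$ and not something weaker; in particular one must be careful that the Palm-measure reduction (removing the atom at $z$, and the extra factor $\E[\chi(\R^d)]$ that appeared in~\eqref{eq:exp_Y_n}) does not introduce an $n$-dependent factor. A secondary technical point is the passage from the functional identity on $\Psi$ and its Palm measures to an honest a.s.\ finiteness statement for $\Gamma$: one applies the identities to the increasing sequence of bounded functionals $\beta\mapsto\int_{\R}\min(e^{ru},M)\beta(du)$, uses that each has finite expectation under $\Psi_{(u)}$ uniformly in (or integrably in) $u$, lets $M\to\infty$ by monotone convergence to conclude $\E_{\Psi_{(u)}}\int e^{ru}\beta(du)<\infty$ for a.e.\ $u$, and finally transfers this back through $\Psi(A)=c\int\P[T_u\Gamma\in A]e^{-\lambda u}du$ to get $\int_{\R}e^{ru}\Gamma(du)<\infty$ a.s.
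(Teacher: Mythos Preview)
Your overall architecture --- reduce from $\Gamma$ to the backward-tree process $\rho$ and then estimate $\int e^{ru}\rho_n(du)$ --- matches the paper's, but the core estimate has a real gap.

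When you take the \emph{unconditional} expectation, the backward walk contributes $\E[e^{-rS_n}]=e^{n\varphi(\lambda-r)}$, so the geometric ratio you obtain is $e^{\varphi(r)+\varphi(\lambda-r)}$, not $e^{\varphi(r)-\varphi'(\lambda)r}$. By convexity, $\varphi(\lambda-r)\geq \varphi(\lambda)-\varphi'(\lambda)r=-\varphi'(\lambda)r$ with strict inequality for $r\neq 0$, so your criterion $\varphi(r)+\varphi(\lambda-r)<0$ is strictly stronger than the desired $\varphi(r)<\varphi'(\lambda)r$. In the supercritical case this does \emph{not} recover the threshold $r_0$; the smallest $r$ for which your series converges is strictly larger than $r_0$. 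Your instinct that something is ``too lossy'' is right, but you locate the loss in the wrong factor: the forward branching contribution $\E\int e^{ru}\chi_{n-1}(du)=e^{(n-1)\varphi(r)}$ is already exact, and replacing it by Chernoff-type tail bounds from Lemma~\ref{lem:large_dev} cannot help. The slack is entirely in the $e^{-rS_n}$ term.

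The paper's remedy is to avoid the full expectation altogether. One conditions on the $\sigma$-algebra $\AAA_{\xi}$ generated by the backward steps $\xi_1,\xi_2,\ldots$, so that $S_n$ becomes deterministic, and then invokes the law of large numbers $S_n/n\to\varphi'(\lambda)$ a.s.\ to replace $e^{-rS_n}$ by $e^{-rn(\varphi'(\lambda)-\eps)}$ for large $n$. This yields
\[
\E\!\left[\int_{\R}e^{ru}\rho(du)\,\Big|\,\AAA_{\xi}\right]
\leq B+\sum_{n} e^{(\varphi(r)-r\varphi'(\lambda)+r\eps)n}\,Y_n
\]
with $Y_n$ depending on $\xi_n$ only and $\E[Y_n]<\infty$; for $\eps$ small and $r>r_0$ the series converges a.s., hence the conditional expectation is a.s.\ finite, hence $\int e^{ru}\rho(du)<\infty$ a.s. Note that nothing is claimed about the unconditional moment --- it may well be infinite.

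A smaller point: the passage from $\rho$ back to $\Gamma$ does not need your truncation-and-monotone-convergence machinery. The set $A=\{\beta\in\MMM:\int e^{ru}\beta(du)=\infty\}$ is translation invariant, so Lemma~\ref{lem:equiv_rho_gamma} applies directly: $\P[\rho\in A]=0$ gives $\P[\Gamma\in A]=0$ in one line.
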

\begin{proof}
By Lemma~\ref{lem:equiv_rho_gamma} we need to show that $\int_{\R} e^{ru} \rho(du)<\infty$ a.s.\ for all $r>r_0$.  Recall that $\AAA_{\xi}$ is the $\sigma$-algebra generated by $\{\xi_n:n\in\N\}$. Denote by $J_{(u)}$ the intensity of the point process $\chi_{(u)}$, $u\in\R$. Fix $\eps>0$.
 We have $S_n/n\to \varphi'(\lambda)$ by the law of large numbers.   It follows that with probability $1$, $S_n/n>\varphi'(\lambda)-\eps$ for sufficiently large $n$. Hence,
\begin{align}
\E \left[\int_{\R}e^{ru}\rho(du)|\AAA_{\xi}\right]
&=1+\sum_{n=1}^{\infty} \int_{\R} e^{ru}T_{-S_n}(J^{[n-1]}*J_{(\xi_n)})(du)\label{eq:proof_prop_int_rho_finite}\\
&=1+\sum_{n=1}^{\infty} \int_{\R} e^{r(u-S_n)} (J^{[n-1]}*J_{(\xi_n)})(du)\notag\\
&=1+\sum_{n=1}^{\infty} e^{-rS_n}e^{(n-1)\varphi(r)}Y_n\notag\\
&\leq B+ e^{\varphi(r)}\sum_{n=1}^{\infty} e^{(-r\varphi'(\lambda)+\varphi(r)+r\eps)n}Y_n,\notag
\end{align}
where $Y_n=\int_{\R}e^{ru}J_{(\xi_n)}(du)$ and $B$ is an a.s.\ finite $\AAA_{\xi}$-measurable random variable. Let us show that $\E [Y_n]$ is finite. Recall that the distribution of $\xi_n$ is $e^{\lambda z}J(dz)$. Using the properties of the Palm measures, we have
\begin{align*}
\E [Y_n]
&=
\int_{\R}\left(\int_{\R} e^{ru} J_{(z)}(du)\right)e^{\lambda z}J(dz)\\
&=\int_{\R}\E \left(\int_{\R}e^{ru+\lambda z}\chi_{(z)}(du)\right)J(dz)\\
&=\E \int_{\R}\int_{\R} e^{ru+\lambda z} \chi(du)\chi(dz)\\
&=e^{\varphi(r)+\varphi(\lambda)}.
\end{align*}
So, $\E[Y_n]$ is finite. 
If $\eps>0$ is chosen such that $\eps<\varphi'(\lambda)-\frac 1r \varphi(r)$, then the series on the right-hand side of~\eqref{eq:proof_prop_int_rho_finite} converges a.s. It follows that
$\E [\int_{\R}e^{ru}\rho(du)|\AAA_{\xi}]<\infty$ a.s.
Hence, $\int_{\R}e^{ru}\rho(du)<\infty$ a.s.
\end{proof}

We are now in position to prove Proposition~\ref{prop:family_drift}. Let $\sum_{i\in I} \delta_{P_i}$ be a realization of the point process $\Gamma$ from Proposition~\ref{prop:maillard}. Consider a branching dynamics $\{\gamma_n: n\in\N_0\}$  starting off $\Gamma$. That is, let $\{\chi_n^{(i)}:n\in\N_0\}$, $i\in  I$, be independent copies of the branching random walk $\{\chi_n:n\in\N_0\}$. Then, $\gamma_n=\sum_{i\in I} T_{P_i}(\chi_n^{(i)})$.  We need to show that
\begin{equation}\label{eq:proof_family_drift1}
\lim_{n\to\infty}\max \gamma_n=-\infty\;\;\;\text{a.s.}
\end{equation}
Let $\AAA_{\Gamma}$ be the $\sigma$-algebra generated by $\Gamma$. We have
$$
\P[P_i+\max\chi_n^{(i)}>-\eps n|\AAA_{\Gamma}]
\leq
e^{-n I(-\eps-\frac 1n P_i)}
\leq
e^{-n I(-\eps)} e^{I'(-\eps) P_i}.
$$
We have $I(-\eps)>0$ and $I'(-\eps)>r_0$ if $\eps>0$ is sufficiently small since $\lim_{\eps\to 0} I(-\eps)=I(0)=\inf_{t\in\R} \varphi(t)<0$ and $\lim_{\eps\to 0} I'(-\eps)=I'(0)=\arg\min \varphi>r_0$.
It follows from Lemma~\ref{lem:palm_exp} that
$$
\sum_{n=1}^{\infty}\P\left[\max \gamma_n>-\eps n|\AAA_{\Gamma}\right]< \infty\;\;\;\text{a.s.}
$$
An application of the Borel--Cantelli lemma completes the proof of~\eqref{eq:proof_family_drift1}.

\section{Proof of Theorem~\ref{theo:equilibrium_states} and Proposition~\ref{prop:k_property}}
The proof is based on the general results of~\cite{main_book}. By Theorem~2.3.6 of~\cite{main_book} a measure $\nu$ on $\R^d$ which is finite on bounded sets is an intensity of some equilibrium state  if and only if the branching dynamics starting off a Poisson point process with intensity $\nu$ converges weakly to a point process with intensity $\nu$ (such measures $\nu$ are called stable). Thus, by Theorem~\ref{theo:main}, the intensity $\nu$ of an equilibrium state $\Xi$ is a linear combination with non-negative coefficients of the measures of the form $e_{\lambda}$, where $\lambda\in K_{\text{st}}$.

By Theorem~4.7.28 in~\cite{main_book} any equilibrium state $\Xi$ with intensity $\nu$ is a mixture of the form~\eqref{eq:mixture} if the following two conditions (called aperiodicity and regularity) are satisfied:
\begin{enumerate}
\item  For every  probability measure $\sigma$ on $\R$ which is absolutely continuous with respect to the Lebesgue measure we have
\begin{equation}\label{eq:aperiodicity}
\lim_{n\to\infty}\|\sigma* D^{[n]}-\sigma*D^{[n+1]}\|=0,
\end{equation}
where $D$ is the probability measure defined in~\eqref{eq:def_D} and $\|\cdot\|$ denotes the total variation.
\item
For every bounded Borel set $A\subset \R$ and every $\eps>0$,
\begin{equation}\label{eq:regularity}
\lim_{n\to\infty} \nu\{z\in\R: J^{[n]}(A-z)>\eps\}=0.
\end{equation}
\end{enumerate}
See Proposition~3.8.4 in~\cite{main_book} for the first condition and Theorem~4.7.19 in~\cite{main_book} for the second one.
The aperiodicity condition holds as long as the measure $J$ is non-lattice by Theorem~11.10.4 and Proposition~11.9.1 of~\cite{matthes_etal_book}.
Let us verify the regularity condition.  In the subcritical case the measure on the left-hand side of~\eqref{eq:regularity} is $0$ for sufficiently large $n$. In the non-subcritical case, we need a different argument. Without restriction of generality let $\nu=e_{\lambda}$, where $\varphi(\lambda)=0$, $\lambda>0$, $\varphi'(\lambda)>0$. It suffices to consider the case $A=(a,\infty)$, where $a\in\R$.
We have, for every $t\geq 0$,
$$
J^{[n]}(A-z)\leq e^{-t (a-z)}\int_{\R} e^{t u}J^{[n]}(du)=e^{-t(a-z)+ n\varphi(t)}.
$$
It follows that
\begin{equation}\label{eq:hlf1}
\{z\in\R: J^{[n]}(A-z)>\eps\}\subset \left(-\frac{\varphi(t)}{t}n+\frac{\log \eps}{t}+a,\infty\right).
\end{equation}
We can choose $t>0$ such that $\varphi(t)<0$. For example, take $t=\lambda-\delta$ for sufficiently small $\delta>0$. The $e_{\lambda}$-measure of the set on the right-hand side of~\eqref{eq:hlf1} goes to $0$ as $n\to\infty$. This completes the proof of Theorem~\ref{theo:equilibrium_states}. Proposition~\ref{prop:k_property} follows from the regularity condition~\eqref{eq:regularity} by Theorem~4.7.19 of~\cite{main_book}.



\bibliographystyle{plainnat}
\bibliography{paper23bib}
\end{document}